\newtheorem{theorem}{Theorem}
\newtheorem{lemma}[theorem]{Lemma}
\newtheorem{prop}[theorem]{Proposition}
\theoremstyle{remark}
\newtheorem{example}[theorem]{{Example}}
\newtheorem{problem}[theorem]{{Problem}}
\def\h#1{\hbox{#1}}
 \newcommand{\RR}{\mathbb{R}}
\def\bpf{\begin{proof}}
\def\epf{\end{proof}}
\def\O{\Omega}\def\L{\Lambda}
\DeclareMathOperator{\OTM}{OTM}
\def\dist{\h{dist}}
\begin{document}

\title{On discontinuity of planar optimal transport maps}

\author[O. Chodosh]{Otis Chodosh}
\address{Stanford University}
\email{ochodosh@math.stanford.edu}

\author[V. Jain]{Vishesh Jain}
\email{visheshj@stanford.edu}

\author[M. Lindsey]{Michael Lindsey}
\email{lindsey3@stanford.edu}

\author[L. Panchev]{Lyuboslav Panchev}
\email{lpanchev@stanford.edu}

\author[Y. A. Rubinstein]{Yanir A.\ Rubinstein}
\address{University of Maryland}
\email{yanir@umd.edu}

\thanks{This research was supported
by the Stanford University SURIM and VPUE, and
NSF grants DGE-1147470, DMS-1206284. YAR was
also supported by a Sloan Research Fellowship.
The authors are grateful to the referee for
a very careful reading and numerous corrections, and in particular
for Proposition \ref{App4}.
}

\maketitle

\begin{abstract}
Consider two bounded domains $\Omega$ and $\Lambda$ in $\mathbb{R}^{2}$, and two sufficiently regular probability measures $\mu$ and $\nu$ supported on them. By Brenier's theorem, there exists a unique transportation map $T$ satisfying $T_\#\mu=\nu$ and minimizing the quadratic cost $\int_{\mathbb{R}^{n}}|T(x)-x|^{2}d\mu(x)$. Furthermore, by Caffarelli's regularity theory for the real Monge--Amp\`ere equations, if $\Lambda$ is convex, $T$ is continuous. 

We study the reverse problem, namely, when is $T$ discontinuous if $\Lambda$ fails to be convex? We prove a result guaranteeing the discontinuity of $T$ in terms of the geometries of $\Lambda$ and $\Omega$ in the two-dimensional case. The main idea is to use tools of convex analysis and the extrinsic geometry of $\partial\Lambda$ to distinguish between Brenier and Alexandrov weak solutions of the Monge--Amp\`ere equation. We also use this approach to give a new proof of a result due to Wolfson and Urbas.

We conclude by revisiting an example of Caffarelli, giving a detailed study of a discontinuous map between two explicit domains, and determining precisely where the discontinuities occur.
\end{abstract}

\section{Introduction}

Much work has gone into finding sufficient conditions for the optimal transportation map (OTM) to be continuous. According to Caffarelli \cite{Caf1992}, the OTM between two smooth densities 
(uniformly bounded away from zero and infinity) defined on bounded domains in $\mathbb{R}^{n}$ with smooth boundaries 
is continuous when the target domain is convex. When $n=2$,
Figalli \cite[Theorem 3.1]{Figalli} showed that even when the target domain is not convex, the OTM is still continuous outside a set of measure zero. This result has subsequently been extended to $n>2$ by Figalli and Kim \cite{FigalliKim}. These results have also been studied for Riemannian manifolds, see the recent survey by De Philippis and Figalli  \cite{DePFig}. 
However, there seems to be no known condition guaranteeing the discontinuity of planar OTMs. The main result of this article is such a condition.

In the present article we restrict ourselves to $n=2$. Throughout this article, we denote the uniform probability measures on $\Omega$ and $\Lambda$ by
\begin{equation*}
\mu:=\frac{1}{|\Omega|}1_\Omega \text{\quad and \quad} \nu:=\frac{1}{|\Lambda|}1_\Lambda,
\end{equation*}
respectively. We suppose for simplicity that
$\O$ and $\L$ have unit area, i.e., $|\O|=|\L|=1$.

Of course, one could consider more general probability measures, and certainly the results we discuss below carry over to measures with smooth densities that are uniformly bounded away from zero and from above.
Our main interest is in the following:

\begin{problem} Give conditions on $\Omega$ and $\Lambda$ guaranteeing the discontinuity of the optimal transportation map from $\mu$ to $\nu$. 
\end{problem}

%There seems to be no known necessary and sufficient condition for the 

%continuity of the OTM, except when $n=1$, in which case

%%. While in the case $n=1$, 

%cyclical monotonicity is equivalent to monotonicity.

%%, there is not such a simple relationship for $n>1$. 

The main result of this note is a sufficient condition guaranteeing the discontinuity of the OTM between two domains in $\RR^2$ assuming the source domain $\O$ is convex. This condition can be phrased solely in terms of the geodesic curvature of the boundary of the target domain. 
%As far as we are aware, this seems to be the first such condition
%in the setting of domains, but it is likely that it
%can be tied to related conditions in the Riemannian setting.
Moreover, we give examples to show that the numerical constant in our condition is essentially sharp. Nevertheless, we show that the condition is not a necessary one for discontinuity. In addition, we give an alternative proof of a result of Wolfson and Urbas on the nonexistence of an OTM that extends smoothly to the boundary between arbitrary domains in $\RR^2$. Our methods are different from theirs in that we rely on cyclical monotonicity. This is what allows us to prove interior discontinuity as opposed to just non-smoothness up to the boundary. 
%On the other hand,
%unlike Wolfson and Urbas, we do need to assume the source is convex
%to obtain this stronger result.
%
Finally, we revisit an example of Caffarelli %\cite{Caf1992} (see also \cite[Theorem 12.3]{Villani:oldandnew}) 
and analyze precisely where the discontinuities occur using symmetry arguments and results of Caffarelli and Figalli. Unlike Caffarelli, we give a constructive proof of the discontinuity, and quantify where and how this discontinuity appears. 

This note is organized as follows: In Section \ref{sec:suff-discont}, we state and prove our conditions for discontinuity. We also give several examples illustrating when these conditions do and do not hold. Then, in Section \ref{squareman}, we consider a concrete example (which we term the ``squareman'') of an optimal map between two domains in which we can precisely determine how the map fails to be continuous. Finally, in Appendix \ref{sec:app}, we state and prove several lemmas that we need for the proof of the curvature condition.

\section{A sufficient condition for discontinuity}\label{sec:suff-discont}

In this section we derive a sufficient condition for the discontinuity of the OTM between $\O$ and $\L$ %$(\O,\mu)$ and $(\L,\nu)$ 
based on the geometry of the boundaries. We further show how our method proves a result of Wolfson, which was subsequently refined by Urbas. It is interesting to note that while Wolfson's original proof uses symplectic geometry, our approach is based on convex analysis.

Consider a simple closed $C^{2}$ curve $C\subset\mathbb{R}^{2}$,
and let ${\bf n}$ denote the inward-pointing unit normal along $C$.
Given a unit-speed parametrization $\gamma:I\to\mathbb{R}^{2}$ of $C$
(here, $I\subset\mathbb{R}$ denotes an interval, which we can assume
equals $[0,L]$ without any loss of generality), the curvature of $C$
is defined to be the function $\kappa:C\to\mathbb{R}$ satisfying
$\gamma''=\kappa{\bf n}$. Note that since we have defined the signed curvature with respect to the inward pointing unit normal, it is independent of the orientation of the curve. %%Finally, when integrating over connected subsets in the boundary of a domain in $\RR^2$, we will always use the counterclockwise orientation.

In this article, we will refer numerous times to connected subsets
or connected components of a simple (possibly closed) curve. Both
of these simply refer to a subset of the (image of the) curve which
is connected (and hence, path connected) in the subspace topology
induced from $\mathbb{R}^{2}$. In particular, we are not referring
to maximally (path) connected components of the curve. Since a continuous
bijection from a compact space to a Hausdorff space is automatically
a homeomorphism, and all our curves have domain $[0,1]$ or $S^{1}$, it follows
that a subset of the curve $\gamma$
is connected if and only if it is of the form $\gamma(I)$, where
$I$ is a sub-interval of $[0,1]$ or $S^{1}$. 

\begin{theorem}
\label{mainthm}
Let $\Omega$ and
$\Lambda$ be bounded, connected, simply connected open domains in $\mathbb{R}^{2}$
such that $\partial\Omega$ and $\partial\Lambda$ are $C^{3}$, closed
curves. Assume $\Omega$ is convex. 
Equip $\Omega$ and $\Lambda$ with the uniform measures $\mu$
and $\nu$. Let $\kappa_{\partial\Omega}$ and $\kappa_{\partial\Lambda}$
be the signed curvatures of $\partial\Omega$ and $\partial\Lambda$
with respect to the corresponding inward-pointing unit normal fields.
If there exists a connected subset $J\subset\partial\Lambda$ with
\begin{equation}
\label{CurvEq}
{\displaystyle \int_{J}\kappa_{\partial\Lambda}<-\pi},
\end{equation}
then $T_1,$
the OTM from $\Omega$ to $\Lambda$, is discontinuous. 

%tIn particular, $T_1$ is not continuously differentiable.
\end{theorem}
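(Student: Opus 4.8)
My plan is to argue by contradiction, moving to the \emph{dual} side where Caffarelli's regularity does apply, and to reduce the whole statement to producing a single interior point of $\Omega$ at which the Brenier potential fails to be differentiable.

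Write $T_1=\nabla u$ with $u$ convex (Brenier), set $v:=u^{*}$, and let $T_2=\nabla v$ be the optimal map from $\nu$ to $\mu$; recall $T_2$ is the a.e.\ inverse of $T_1$. Since the \emph{target} of $T_2$ is $\Omega$, which is convex, Caffarelli's interior regularity applies to $T_2$: $v$ is strictly convex and $C^{1}_{\mathrm{loc}}$ on $\Lambda$. Hence $T_2$ is a homeomorphism of $\Lambda$ onto an open set $U:=T_2(\Lambda)\subseteq\Omega$, $U$ has full Lebesgue measure in $\Omega$, and $u\in C^{1}(U)$ with $\nabla u=T_2^{-1}$ on $U$. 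The first reduction is: \emph{if $U\subsetneq\Omega$ then $T_1$ is discontinuous.} Indeed $K:=\Omega\setminus U$ is a closed, Lebesgue-null subset of $\Omega$ whose complement $U\cong\Lambda$ is simply connected, so $K$ is ``slit-like''; approaching a point $x^{*}\in K$ from two sides of such a slit, $\nabla u=T_2^{-1}$ must converge to two distinct points of $\partial\Lambda$ (otherwise the slit ``heals'' and $U$ is all of $\Omega$), so $\partial u(x^{*})$ contains a nondegenerate segment, and by upper semicontinuity of $\partial u$ the a.e.-gradient $\nabla u$ takes values near both endpoints arbitrarily close to $x^{*}$; thus $T_1$ has no continuous representative near $x^{*}$. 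Equivalently, it suffices to exhibit distinct $y_1,y_2\in\partial\Lambda$ and $x^{*}\in\Omega$ with $x^{*}\in\partial v(y_1)\cap\partial v(y_2)$ — that is, a chord $[y_1,y_2]$ along which $v$ is affine with common gradient an interior point of $\Omega$.

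So suppose toward a contradiction that $U=\Omega$, i.e.\ $T_1$ is continuous. Then $u\in C^{1}(\Omega)$, and since $\nabla u=T_1$ takes values in $\overline\Lambda$ for $\mu$-a.e.\ $x$, continuity forces $\nabla u(\Omega)\subseteq\overline\Lambda$; hence the subdifferential image satisfies $\partial u(\Omega)=\nabla u(\Omega)\subseteq\overline\Lambda$. Dually: for every $z\notin\overline\Lambda$ the convex function $x\mapsto u(x)-\langle z,x\rangle$ attains its minimum over $\overline\Omega$ only on $\partial\Omega$, equivalently $\partial v(z)\cap\Omega=\varnothing$, so $\nabla v$ maps $\RR^{2}\setminus\overline\Lambda$ into $\partial\Omega$. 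I contradict this by producing a point $z$ in the ``notch'' of $\Lambda$ bounded by $J$ for which the minimum is instead \emph{interior}, so that $z\in\partial u(x^{*})$ for some $x^{*}\in\Omega$. This is precisely where convexity of $\Omega$ and the hypothesis $\int_{J}\kappa_{\partial\Lambda}<-\pi$ enter: the point $z$ must be ``surrounded'' by $\partial\Lambda$ strongly enough that no outward normal direction of $\Omega$ can reconcile the large turning of $J$ with $\nabla v$ collapsing the notch onto the convex curve $\partial\Omega$; concretely one uses that a connected arc with total signed curvature below $-\pi$ has outward normal sweeping an angular interval of length exceeding $\pi$, hence meeting both $e$ and $-e$ for some direction $e$. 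I would package the purely geometric content — that such an arc forces a surrounded point together with the attendant incompatibility — as the lemmas of the appendix (in particular Proposition \ref{App4}), after which the transport argument above closes quickly. It is exactly the convexity of $u$ (cyclical monotonicity of the optimal plan) that upgrades ``$\nabla u$ lands in $\overline\Lambda$'' to ``$\partial u(\Omega)\subseteq\overline\Lambda$'', and it is the failure of the latter, forced by the geometry of $J$, that yields an honest \emph{interior} discontinuity rather than mere non-smoothness at $\partial\Omega$.

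The main obstacle is this last, geometric step: quantitatively matching the threshold $-\pi$ to the existence of the surrounded notch point $z$ with an interior minimizer, all while controlling the boundary behaviour of $\nabla v$ (equivalently of $u$ on $\partial\Omega$, and one may first reduce to $\Omega$ strictly convex) even though $\Lambda$ is not convex and Caffarelli's \emph{boundary} regularity is unavailable there. I expect the constant $\pi$ to be exactly what forces the outward normal of $\partial\Lambda$ along $J$ to ``turn past a half-circle'', so that no single supporting direction of $\Omega$ can accommodate the whole of $J$; this is also what makes the constant essentially sharp.
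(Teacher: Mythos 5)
Your setup — pass to the dual side, invoke Caffarelli's interior regularity for $T_2$ because $\Omega$ is convex, and aim to contradict the inclusion $\partial u(\Omega)\subseteq\overline{\Lambda}$ — is sound and is broadly how one must start. But the decisive step of your outline, the production of a ``surrounded'' point $z$ in the notch for which $u-\langle z,\cdot\rangle$ has an \emph{interior} minimizer, is exactly the crux of the whole theorem and you leave it entirely unproven. Knowing that $u$ is convex and $C^1$ with $\nabla u(\Omega)\subseteq\overline\Lambda$ gives no such conclusion on its own (a smooth convex $u$ whose gradient range is a small disk inside $\Lambda$ satisfies both and has no such $z$); what is missing is a concrete mechanism by which the push-forward condition $(\nabla u)_\#\mu=\nu$ and the turning bound $\int_J\kappa_{\partial\Lambda}<-\pi$ interact. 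Your appeal to Proposition \ref{App4} as the repository of the relevant geometric content is a misidentification: that proposition merely says the OTM into a convex target extends continuously to $\overline\Lambda$, and it plays no role in matching the threshold $-\pi$.

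The paper's proof is substantially different at precisely the point you leave open. Rather than hunting for a slit point of $\partial u$, it constructs a convex $C^2$ curve $\beta\subset\Omega$ close to $\partial\Omega$ whose image $\alpha=T_1(\beta)$ tracks $\partial\Lambda$, proves (Lemma \ref{technicallemma}, resting on Propositions \ref{App1}--\ref{App3}) that $\alpha$ inherits an arc $I_1$ of total signed curvature less than $-\pi$, and then uses the first-order consequence of monotonicity, $\langle\dot\alpha_1,\dot\beta_1\rangle\ge0$, to trap the tail-to-tail angle $f(t)$ between corresponding tangent vectors in $[-\pi/2,\pi/2]$. Gauss--Bonnet-type bookkeeping then shows the total swing of $f$ along $I_1$ is strictly less than $-\pi$, a contradiction. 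This is exactly the quantitative reconciliation between ``turning past a half-circle'' and ``image lying on a convex curve'' that your sketch gestures at; it is where the factor $\pi$ and the convexity of $\Omega$ actually do their work, and it is not dispensable. I would also note that your handling of the $U\subsetneq\Omega$ case (``$K$ is slit-like'') is heuristic, whereas the paper avoids it cleanly: continuity of $T_1$ together with the a.e.\ inverse relation and Caffarelli's regularity for $T_2$ already force $T_2$ to be a $C^2$-diffeomorphism of $\Lambda$ onto $\Omega$, so $U=\Omega$ automatically.
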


We employ similar techniques, together with an additional modification, to give a new proof of the following result due to Wolfson and Urbas \cite{Wolfson,Urbas2007}.
%A more general, but slightly weaker, formulation of the above result
%works for any $\Omega$ such that $\partial\Omega$ is $C^{2}$ boundary.
%However, in this case, we can extend the above result to general $\Omega$
%with $C^{2}$ boundary at the cost of getting a necessary condition
%only for $T_1$ being smooth up to the boundary.\\

\begin{theorem}[Wolfson and Urbas] \label{maincor}
Let $\Omega$ and $\Lambda$ be two bounded, connected, simply connected domains
in $\mathbb{R}^{2}$ with $C^{2}$ boundaries. Let $\kappa_{\partial\Omega}$
and $\kappa_{\partial\Lambda}$ denote the signed curvatures (as defined above) of the two boundaries.
% two domains when oriented clockwise. 
Assume that 
\begin{equation}\label{eq:1}
\inf_{J\subset\partial\Lambda}\int_{J}\kappa_{\partial\Lambda}
\le
\inf_{I\subset\partial\Omega}\int_{I}\kappa_{\partial\Omega}-\pi,
\end{equation}
where $I$ and $J$ are connected subsets of $\partial\Omega$
and $\partial\Lambda$, respectively. Then there does not exist a $C^1$-diffeomorphism $T_1:\overline{\Omega}\to\overline{\Lambda}$ whose
restriction to $\O$ is an OTM.
\end{theorem}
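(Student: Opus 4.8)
The plan is to argue by contradiction, exploiting that the potential of an OTM is convex — so that its gradient is a \emph{monotone} map — which is the smooth shadow of the cyclical monotonicity used elsewhere in this paper. Suppose a $C^{1}$-diffeomorphism $T_{1}\colon\overline{\Omega}\to\overline{\Lambda}$ whose restriction to $\Omega$ is an OTM exists. By Brenier's theorem $T_{1}=\nabla u$ on $\Omega$ for some convex $u$, and the diffeomorphism hypothesis forces $u\in C^{2}(\overline{\Omega})$ with Hessian $A:=D^{2}u$ continuous on $\overline{\Omega}$, symmetric, positive semidefinite (convexity) and everywhere invertible (it is the differential of a diffeomorphism), hence positive definite on all of $\overline{\Omega}$. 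In particular $\det A>0$, so $T_{1}$ is orientation-preserving and restricts to a homeomorphism $\partial\Omega\to\partial\Lambda$ carrying the counterclockwise orientation to the counterclockwise orientation.

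First I would set up a curvature comparison along corresponding boundary arcs. Parametrize $\partial\Omega$ by arc length counterclockwise, $\gamma\colon\mathbb{R}/L\mathbb{Z}\to\mathbb{R}^{2}$, and put $\sigma:=T_{1}\circ\gamma$, a positively oriented (though not unit-speed) parametrization of $\partial\Lambda$. Since $\gamma''=\kappa_{\partial\Omega}\,{\bf n}$ with ${\bf n}$ inward, $\kappa_{\partial\Omega}$ is the rotation rate of the unit tangent $\gamma'$, and likewise $\kappa_{\partial\Lambda}$ is the rotation rate of $\sigma'/|\sigma'|$ along $\partial\Lambda$; thus for any arc $I=\gamma([s_{0},s_{1}])\subset\partial\Omega$ and its image $J=T_{1}(I)=\sigma([s_{0},s_{1}])\subset\partial\Lambda$,
\[
\int_{I}\kappa_{\partial\Omega}=\phi(s_{1})-\phi(s_{0}),\qquad \int_{J}\kappa_{\partial\Lambda}=\psi(s_{1})-\psi(s_{0}),
\]
where $\phi$ and $\psi$ are continuous lifts of the angles of $\gamma'$ and of $\sigma'/|\sigma'|$. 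The crucial point is that $\sigma'(s)=A(\gamma(s))\,\gamma'(s)$ with $A$ symmetric positive definite, so $\langle\gamma'(s),\sigma'(s)\rangle>0$: the vector $\sigma'(s)$ differs from $\gamma'(s)$ by a rotation of signed angle in $(-\pi/2,\pi/2)$. Hence $\psi$ may be chosen so that $\alpha:=\psi-\phi$ takes values in $(-\pi/2,\pi/2)$ throughout; one checks $\alpha$ descends to a continuous function on $\mathbb{R}/L\mathbb{Z}$, so by compactness $|\alpha|\le\pi/2-\varepsilon$ for some $\varepsilon>0$. Subtracting the two identities gives, for every arc $I\subset\partial\Omega$ and $J=T_{1}(I)$,
\[
\int_{J}\kappa_{\partial\Lambda}=\int_{I}\kappa_{\partial\Omega}+\alpha(s_{1})-\alpha(s_{0})\ \ge\ \int_{I}\kappa_{\partial\Omega}-\pi+2\varepsilon.
\]

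Finally, since $T_{1}|_{\partial\Omega}$ is a homeomorphism, $I\mapsto T_{1}(I)$ is a bijection between the connected subsets of $\partial\Omega$ and those of $\partial\Lambda$; taking the infimum over $I$ in the displayed inequality yields
\[
\inf_{J\subset\partial\Lambda}\int_{J}\kappa_{\partial\Lambda}\ \ge\ \inf_{I\subset\partial\Omega}\int_{I}\kappa_{\partial\Omega}-\pi+2\varepsilon\ >\ \inf_{I\subset\partial\Omega}\int_{I}\kappa_{\partial\Omega}-\pi,
\]
contradicting \eqref{eq:1}. I expect the main obstacle to be the sign/orientation bookkeeping that makes the constant come out to exactly $\pi$ — the interplay of the inward-normal convention with the orientation-preserving property of $T_{1}$, which is what forces the two boundaries to be traversed compatibly — together with the final step: the per-arc inequality is \emph{strict}, whereas \eqref{eq:1} is not, so one must extract a \emph{uniform} gap $\varepsilon$ from compactness of $\mathbb{R}/L\mathbb{Z}$ rather than merely note strictness arc by arc. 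For Theorem \ref{mainthm}, where no boundary regularity of $T_{1}$ is assumed, the same mechanism must be run on a merely monotone (indeed cyclically monotone) map, which is where the convex-analytic tools replace the calculus above.
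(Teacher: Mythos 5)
Your proof is correct, and while the overall strategy is the same as the paper's — compare the turning of tangent vectors along corresponding boundary arcs, using that the angle between $\gamma'$ and $(T_1\circ\gamma)'$ stays strictly inside $(-\pi/2,\pi/2)$ — you reach the key strictness in a more direct way. The paper also notes that the potential ($w$ for $T_2$, in its notation) has $\nabla^2 w>0$, but gets there via $\det\nabla^2 w=1$ from the Monge--Amp\`ere equation, and then goes through a three-point cyclical-monotonicity inequality together with a disk tangent to $\partial\Lambda$ to establish that the inner product of corresponding tangents is strictly positive; you instead observe immediately that $D^2u$ is positive semidefinite by convexity and invertible because $T_1$ is a $C^1$-diffeomorphism (so $DT_1^{-1}\cdot DT_1=I$ up to the boundary), hence positive definite, giving $\langle\gamma',\sigma'\rangle=\gamma'^{\mathsf T}D^2u\,\gamma'>0$ in one line. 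This is cleaner, and it also works without invoking the constant-Jacobian normalization. The two proofs also differ in how they convert the pointwise strict inequality into a contradiction with the non-strict hypothesis \eqref{eq:1}: the paper picks an extremal arc $J$ realizing $\inf_J\int_J\kappa_{\partial\Lambda}$ (attained by compactness of $\partial\Lambda$) and then argues the endpoint values of the angle cannot both be $\pm\pi/2$, whereas you extract a uniform gap $\varepsilon$ from compactness of $\mathbb{R}/L\mathbb{Z}$ and the continuity of the lifted angle difference $\alpha$, then pass to infima over all arcs — both are valid, and your uniform-gap version sidesteps the need to argue attainment. Finally you phrase everything for $T_1=\nabla u$ on $\Omega$ while the paper works with the inverse map $T_2=\nabla w$ on $\Lambda$; these are dual and interchangeable here.
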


When $\Omega$ is convex, of course $\kappa_{\partial\Omega}\ge0$. One may
ask whether \eqref{CurvEq} may be weakened. 
%As we will show this is not the case. 
%Below, we construct an example where equality
%in \eqref{CurvEq} is achieved and $\OTM(\Omega,\Lambda)$ is %continuous.In this sense the condition is sharp. 
Below, we will construct an example (Example 6) to show that at least when the $C^{3}$ hypothesis in Theorem \ref{mainthm} is replaced with piecewise smooth, the constant $-\pi$ in \eqref{CurvEq} cannot be increased. 
However, \eqref{CurvEq} is not a necessary condition: in Section \ref{squareman} we will construct an 
example where $\Omega$ is convex and $\Lambda$ has a connected subset of total curvature
of at most $-\frac{\pi}{2}$, but $\OTM(\Omega,\Lambda)$ is discontinuous.

Condition \eqref{eq:1} is also not necessary: below (Example 5), we construct $\Omega$ and $\Lambda$ so that $\inf_{I\subset\partial\Lambda}\int_{I}\kappa_{\partial\Lambda}
=\inf_{J\subset\partial\Omega}\int_{J}\kappa_{\partial\Omega}$,
but $\OTM(\Omega,\Lambda)$ is not a $C^1$ diffeomorphism up to the boundary.

The strength of Theorem \ref{mainthm} lies in the fact
that it does not assume any nice behavior of the optimal map near
the boundary. At the same time it shows not only lack of regularity,
but discontinuity. In the proof of Theorem \ref{mainthm}, the convexity 
is used to show a continuous
OTM from $\Omega$ to $\Lambda$ is necessarily a $C^1$-diffeomorphism, by Caffarelli's regularity theorem. If we are concerned only with the
nonexistence of OTMs which are $C^1$ diffeomorphisms up to the
boundary, one can do away with the convexity assumption, which
is the content of Theorem \ref{maincor}.

The proof of Theorem \ref{maincor} contains two differences
from the proof of Theorem \ref{mainthm}.
First, the technical Lemma \ref{technicallemma} is no longer necessary, since
we are assuming regularity up to the boundary.
On the other hand, condition \eqref{eq:1} is weaker
than \eqref{CurvEq}, and so one must make use of cyclical
monotonicity and not just of monotonicity.

%for three points, rather than two. 

%\subsection{Proof of the curvature criterion \label{proofs}}
\begin{proof}[Proof of Theorem \ref{mainthm}]
Assume for the  sake of contradiction that the OTM 
$$
T_1:\Omega\to\Lambda
$$
is continuous everywhere on $\Omega$. Since $\Omega$ is convex,
we have from Caffarelli's regularity theorem 
\cite[Theorem 12.50]{Villani:oldandnew}
that the map $T_2$ (the optimal map from $\Lambda$ to $\Omega$) is 
$C^2$ everywhere on $\Lambda$.
We also know that for $\mu$-almost all $x$ and for $\nu$-almost
all $y$, $T_2\circ T_1(x)=x$ and $T_1\circ T_2(y)=y$
\cite[Theorem 2.12]{Villani}. Since both compositions are continuous
and are equal to the identity almost everywhere, it follows that they
must be the identity everywhere, and therefore, that $T_2^{-1}=T_1$
everywhere. Moreover, since $T_2$ is $C^2$ by Caffarelli's regularity
theorem, and since its Jacobian matrix is nonsingular at every point
in $\Lambda$ by the Monge-Amp\`ere equation, it follows from the inverse
function theorem that $T_1$ is also $C^2$. Hence $T_2$
is a $C^2$-diffeomorphism between $\Lambda$ and $\Omega$. 

For sufficiently small $\epsilon>0$, consider the sets 
(see Figure \ref{fig:1})
$$
\Lambda_{\epsilon}=\{x\in\Lambda:
\h{dist}(x,\partial\Lambda)<\epsilon\},\qquad
\Gamma_{\epsilon}=\{x\in\Lambda:
\h{dist}(x,\partial\Lambda)=\epsilon\}.
$$ 
From Proposition \ref{App1}, we know that there exists $\hat{\epsilon}>0$ such that for
every $0<\epsilon\leq\hat{\epsilon}$, the curve $\Gamma_{\epsilon}$
is $C^{1}$. Furthermore, there exists a diffeomorphism $f_{\epsilon}:\partial\Lambda\to\Gamma_{\epsilon}$
such that the vector $f_{\epsilon}(x)-x$ is normal to $\partial\Lambda$
at $x$ and to $\Gamma_{\epsilon}$ at $f_{\epsilon}(x)$, and has
magnitude $|f_{\epsilon}(x)-x|=\epsilon$. Consider the image $T_2(\Gamma_{\epsilon})=\Theta_{\epsilon}$.

%\begin{figure}[tbp] % float placement: (h)ere, page (t)op, page (b)ottom, other (p)age
%  \centering
%  % file name: C:/Users/me/Desktop/Dropbox/tex/Drafts/SURIM/OTcont/4-12-13/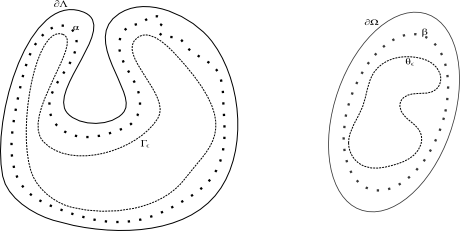
%  \includegraphics[width=5.67in,height=2.85in,keepaspectratio]{drawing1-curv}
%  \caption{Caption for drawing1-curv}
%  \label{fig:drawing1-curv}
%\end{figure}

\begin{figure}
\includegraphics[scale=.8]{}
\caption{$\alpha=T_1(\beta)$ is ``close'' to $\partial\Lambda$ and $\beta$ is convex.}\label{fig:1}
\end{figure}

Since $T_2$ is $C^2$ and $\Gamma_{\epsilon}$ is compact, we
have that $\Theta_{\epsilon}\subset\Omega$ is also a compact, closed
$C^{1}$ curve, so that in particular, $\dist(\partial\Omega,\Theta_{\epsilon})>\delta>0$.
In order to be able to work in the interior of $\Omega$, we would
like to construct a $C^{2}$ convex curve $\beta\subset\Omega$ such
that the interior of the region enclosed by $\beta$ completely contains
$\Theta_{\epsilon}$. 
Note that here, and elsewhere, we use the standard terminology of calling a closed curve convex if it is the boundary of a bounded convex set. 
Such a $\beta$ can readily be constructed:
we pick a point $x_{0}$ contained in the interior of the region bounded
by $\Theta_{\epsilon}$ and scale points on $\partial\Omega$ with
respect to $x_{0}$ by a factor $1-\tilde{\epsilon}<t<1$ for a small enough $\tilde{\epsilon}>0$. Denote
the resulting curve by $\beta_{t}$. Then, $\beta_{t}$ is seen to
be convex and $C^{2}$, since $\partial\Omega$ is convex and $C^{2}$ (in fact we have assumed that it is $C^{3}$).
Further, we can always arrange $\dist(\partial\Omega,\beta_{t})<\frac{\delta}{2}$
by picking $\tilde{\epsilon}>0$ small enough. In particular, we can
choose $t$ so that $\beta=\beta_{t}$ contains $\Theta_{\epsilon}$
completely in its interior, and is as close to $\partial\Omega$ as
desired. Note that the convexity of $\beta$ implies that the signed
curvature $\kappa_{\beta}$ of $\beta$ with respect to the inward
unit normal field is non-negative. 

\begin{comment}
Next, consider the $C^{2}$ curve $\alpha=T_1(\beta)$. Since
$T_1$ is assumed to be continuous everywhere on $\Omega$, it
follows that $\alpha\subset\Lambda_{\epsilon}$. Moreover,
$\alpha$ contains
$\Gamma_{\epsilon}$ in its interior in the sense that every continuous
path between $\Gamma_{\epsilon}$ and $\partial\Lambda$ must intersect
$\alpha$. Indeed, let $c:[0,1]\rightarrow\bar\Lambda$ (note that we may assume without loss of generality that $c([0,1))\subset\Lambda$) be a continuous map such that
$c(0)\in\Gamma_{\epsilon}$ and $c(1)\in\partial\Lambda$. Then $T_2(c(0))\in
\Theta_\epsilon$, while $\cap_{t\in (0,1)}\overline{T_2(c(t,1))} \subset \partial\Omega$. Thus, there exists some $t_0\in (0,1)$ such that for all $t\in (t_0,1)$
one has $\dist(T_2(c(t)),\partial\Omega)<\dist(\beta,\partial\Omega)$, so in particular
$T_2(c)$ intersects $\beta$.
\end{comment}
Next, consider the $C^{2}$ curve $\alpha=T_{1}(\beta)$. We claim
that $\alpha$ contains $\Gamma_{\epsilon}$ in its interior in the
sense that every continuous path between $\Gamma_{\epsilon}$ and
$\partial\Lambda$ must intersect $\alpha$. Indeed, let $c:[0,1]\rightarrow\bar{\Lambda}$ (note that we may assume without loss of generality that $c([0,1))\subset\Lambda$)
be a continuous map such that $c(0)\in\Gamma_{\epsilon}$ and $c(1)\in\partial\Lambda$.
Then, $T_{2}(c(0))\in\Theta_{\epsilon}$, while $\cap_{t\in(0,1)}\overline{T_{2}(c(t,1))}\neq\emptyset$
by the finite intersection property applied to the compact space $\overline{\Omega}$.
We claim that $\cap_{t\in(0,1)}\overline{T_{2}(c(t,1))}\subset\partial\Omega$.
Indeed, suppose that $\cap_{t\in(0,1)}\overline{T_{2}(c(t,1))}\nsubseteq\partial\Omega$.
Since the intersection is nonempty and contained in $\overline{\Omega}$,
we must have some point $q\in\Omega$ such that $q\in\cap_{t\in(0,1)}\overline{T_{2}(c(t,1))}$.
Consider the point $p=T_{1}(q)$. Since $p\in\Lambda$, we can (by
the continuity of $c$) find some $t'\in(0,1)$ sufficiently close
to $1$ such that $p\notin c[t',1]$. Since $T_{2}$ is an open map
on $\Lambda$ by hypothesis, it follows that $q=T_{2}(p)\notin\overline{T_{2}(c(t',1))}$,
which gives us a contradiction. Once we have that $\cap_{t\in(0,1)}\overline{T_{2}(c(t,1))}\subset\partial\Omega,$we
can prove that $T_{2}(c)$ intersects $\beta$, and hence, that $c$
intersects $\alpha$. For this, it clearly suffices to show that
there exists some $t_{0}\in(0,1)$ such that for all $t\in(t_{0},1)$
one has $\dist(T_{2}(c(t)),\partial\Omega)<\dist(\beta,\partial\Omega)$.
Suppose such a $t_{0}$ does not exist. Then, there exists a sequence
$t_{n}\uparrow1$ such that $p_{n}=T_{2}(c(t_{n}))$ sits inside the
closure of the domain bounded by $\beta$ (we will denote this domain
by dom$(\beta)$). By compactness of dom$(\beta)$, we can assume after
possibly passing to a subsequence that $p_{n}\rightarrow p$ in dom$(\beta)$.
But then, we have that $p\in\Omega$, and $p\in\cap_{t\in(0,1)}\overline{T_{2}(c(t,1))}$,
which is a contradiction. Note that since connected components are
preserved under homeomorphisms, $\Gamma_{\epsilon}$ is a Jordan curve,
and $\alpha$ contains at least one point in $\Gamma_{\epsilon}$,
we have also showed that $\alpha\subset\Gamma_{\epsilon}$. 

We will need the following lemma.
%%Recall that throughout this article we use the counterclockwise orientation for integration over connected subsets of the boundaries.

\begin{lemma}\label{technicallemma} Let $\kappa_{\alpha}$ be the signed curvature
of $\alpha$ with respect to the inward pointing unit normal vector
field. Then for sufficiently small $\epsilon$ there exists some connected
subset $I_{1}\subset\alpha$ for which the total signed curvature
is less than $-\pi$, i.e., ${ \int_{I_{1}}\kappa_{\alpha}<-\pi}$.
\end{lemma}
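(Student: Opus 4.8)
The plan is to transfer the strict inequality \eqref{CurvEq} from $\partial\Lambda$ to the curve $\alpha$, which we know lies in the thin shell $\overline{\Lambda_\epsilon}$ and separates $\Gamma_\epsilon$ from $\partial\Lambda$. First, fix the connected arc $J\subset\partial\Lambda$ with $\int_J\kappa_{\partial\Lambda}<-\pi$ supplied by the hypothesis of Theorem \ref{mainthm}. Since $t\mapsto\int_{\gamma([t_1,t])}\kappa_{\partial\Lambda}$ is continuous, the intermediate value theorem lets us pass to a connected sub-arc $J'=\gamma([t_a,t_b])\subseteq J$ whose endpoints lie in the relative interior of $J$ and with $\int_{J'}\kappa_{\partial\Lambda}=-\pi-2\delta$ for a small fixed $\delta>0$. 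Keeping a definite amount of room on either side of $J'$ inside $J$ is what will later allow us to absorb error terms of size $O(\epsilon)$.

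Next we pass to tubular-neighborhood coordinates. By Proposition \ref{App1} together with the $C^3$ hypothesis on $\partial\Lambda$, for small $\epsilon$ the map $\Phi(t,\rho)=\gamma(t)+\rho\,\mathbf{n}(t)$, with $\gamma$ the counterclockwise unit-speed parametrization of $\partial\Lambda$, is a $C^2$ diffeomorphism onto $\overline{\Lambda_{\hat\epsilon}}$ taking $\{\rho=0\}$ to $\partial\Lambda$ and $\{\rho=\epsilon\}$ to $\Gamma_\epsilon$, and the associated nearest-point projection $P:\overline{\Lambda_{\hat\epsilon}}\to\partial\Lambda$ is $C^2$. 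Because $\alpha\subset\overline{\Lambda_\epsilon}$ separates $\Gamma_\epsilon$ from $\partial\Lambda$, every normal segment $\Phi(\{t\}\times[0,\epsilon])$ — a path from $\partial\Lambda$ to $\Gamma_\epsilon$ — must meet $\alpha$, so $Q:=P\circ\alpha$ maps $\alpha$ onto $\partial\Lambda$; since $\alpha$ is an embedded closed curve in the annular shell that does not bound a disk there, it winds around exactly once and $Q$ has degree $\pm1$. Orient $\alpha$ as the positively oriented boundary of its Jordan domain (which contains the core $\{x\in\Lambda:\dist(x,\partial\Lambda)>\epsilon\}$, so $\alpha$ and $\partial\Lambda$ are co-oriented along $J$), so that $\deg Q=+1$. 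Writing $\alpha(s)=\Phi(t(s),\rho(s))$, differentiating, and using $\mathbf{n}'=-\kappa_{\partial\Lambda}\gamma'$, one obtains the exact identity
\[
\alpha'(s)=R_{\Theta(s)}\big(\,(1-\rho(s)\,\kappa_{\partial\Lambda}(t(s)))\,t'(s),\ \rho'(s)\,\big),
\]
where $R_\theta$ is the rotation by angle $\theta$ and $\Theta(s)$ is a continuous choice of the tangent angle of $\partial\Lambda$ at $Q(s)$. Hence the tangent angle of $\alpha$ splits exactly as $\theta_\alpha(s)=\Theta(s)+\psi(s)$, where $\psi$ is a continuous lift of the angle of the planar vector $\big((1-\rho\kappa_{\partial\Lambda})t',\,\rho'\big)$; since over one loop $\theta_\alpha$ increases by $\int_\alpha\kappa_\alpha=2\pi$ and $\Theta$ increases by $2\pi$ (as $\deg Q=+1$), the function $\psi$ is periodic, i.e.\ it does not wind.

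Now extract the arc. Using $\deg Q=+1$, choose $s_a<s_b$ along $\alpha$ with $Q(s_a)=\gamma(t_a)$, $Q(s_b)=\gamma(t_b)$, $Q(s)\in J'$ for $s\in[s_a,s_b]$, and $t'(s_a),t'(s_b)\ge0$ — e.g.\ take $s_b$ the first passage of $\alpha$ through the normal segment over $\gamma(t_b)$ and $s_a$ the last passage through the normal segment over $\gamma(t_a)$ before $s_b$. On $[s_a,s_b]$ the lift $\Theta$ is single-valued with $\Theta(s_b)-\Theta(s_a)=\int_{J'}\kappa_{\partial\Lambda}$, so for $I_1:=\alpha([s_a,s_b])$,
\[
\int_{I_1}\kappa_\alpha=\theta_\alpha(s_b)-\theta_\alpha(s_a)=\big(\Theta(s_b)-\Theta(s_a)\big)+\big(\psi(s_b)-\psi(s_a)\big)=-\pi-2\delta+\big(\psi(s_b)-\psi(s_a)\big),
\]
and it suffices to show $\psi(s_b)-\psi(s_a)<\delta$ for $\epsilon$ small.

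This last estimate is the main obstacle. A priori $\alpha$ might oscillate inside the thin shell, so that $\psi$ — which records the direction of $\alpha$ relative to the (rotating) tangent direction of $\partial\Lambda$ — swings between $s_a$ and $s_b$ by as much as $\sim\pi$, which would only yield $\int_{I_1}\kappa_\alpha\lesssim-\delta$. The natural way to rule this out is to use the remaining freedom to slide $J'$ inside $J$ and the extrinsic geometry of the concave arc: along its concave part the complement $\Lambda^c$ is locally convex, and choosing $\gamma(t_a),\gamma(t_b)$ at points where $\partial\Lambda$ is tangent to a support line of this region pins $\Lambda$, hence the shell, hence $\alpha$, into a thin slab near each of $\gamma(t_a)$ and $\gamma(t_b)$; this forces $\alpha$ to run nearly tangent to $\partial\Lambda$ at the corresponding crossings, so that $\psi(s_a),\psi(s_b)=O(\epsilon)$ (and $\psi$ does not wind along $I_1$). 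Packaging this pinning estimate, together with the facts about $\Gamma_\epsilon$ and $\Phi$ used above, is precisely what the lemmas in the Appendix are for. This is also the step that uses the $C^3$ hypothesis: it is what makes $\Phi$ and $P$ of class $C^2$, hence $t(\cdot)$ and $\psi$ of class $C^1$, so that the identity $\theta_\alpha=\Theta+\psi$ is meaningful.
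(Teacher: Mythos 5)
Your framework is sound up to the last paragraph: the tubular-neighborhood decomposition $\theta_\alpha=\Theta+\psi$, the degree argument for $Q=P\circ\alpha$, and the reduction to bounding $\psi(s_b)-\psi(s_a)$ are all correct. You also correctly identify that the difficulty is exactly there: nothing a priori prevents $\alpha$ from oscillating in the shell, so $\psi$ can swing substantially between $s_a$ and $s_b$.

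The gap is that the proposed fix does not close it. Your ``pinning'' idea is: choose the endpoints of $J'$ at points where $\partial\Lambda$ is tangent to a support line, so that near $\gamma(t_a)$ the whole shell, hence $\alpha$, lies in a thin slab, and conclude that $\alpha$ must cross the normal over $\gamma(t_a)$ nearly tangentially, giving $\psi(s_a)=O(\epsilon)$. But being trapped in an $O(\epsilon)$-thin slab does \emph{not} control the tangent direction of $\alpha$ at the crossing: a curve can zigzag inside a thin slab and cross a transverse line at an angle arbitrarily close to $\pi/2$. Your choice of $s_a$ as ``the last passage through the normal over $\gamma(t_a)$'' gives no information about $\alpha'(s_a)$. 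So the claim $\psi(s_a),\psi(s_b)=O(\epsilon)$ is not justified, and with it the estimate $\psi(s_b)-\psi(s_a)<\delta$. Likewise, ``$\psi$ is periodic'' only says $\psi$ returns to its value after one full loop; it does not prevent a net swing on the subinterval $[s_a,s_b]$.

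What the paper does to solve precisely this problem is structurally different. It does not try to bound the endpoint angles by confinement. Instead it uses Proposition \ref{App2}, a Rolle/Cauchy-mean-value argument, to locate interior points $E_1$ (between $A_1$ and $C_1$) and $F_1$ (between $D_1$ and $B_1$) of $\tilde I$ where the tangent to $\tilde I$ is \emph{parallel to a chord} that is itself within $\overline\epsilon$ of the tangent to $\partial\Lambda$ — so the tangent direction is genuinely pinned there, not merely the position. It then applies Gauss--Bonnet to the region bounded by $I$, $\tilde I$, $AA_1$, $BB_1$ to get an exact relation for $\int_{\tilde I}\kappa_\alpha$, splits $\tilde I$ at $E_1,F_1$ into $\tilde I_1,\tilde I_2,\tilde I_3$, and writes $\int_{\tilde I_1}\kappa_\alpha$ and $\int_{\tilde I_3}\kappa_\alpha$ modulo unknown winding integers $k,k'$. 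The case analysis on $(k,k')$ is the mechanism that absorbs exactly the oscillation you are worried about: whichever values $k,k'$ take, one of $\tilde I_1$, $\tilde I_3$, $\tilde I_2\cup\tilde I_3$, $\tilde I_1\cup\tilde I_2$, or $\tilde I_2$ is forced to have total curvature below $-\pi$. Your proof would need either to adopt this MVT-plus-case-analysis step, or to supply a correct quantitative bound on $\psi$ at the cut points; as written, that step is missing.
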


The lemma says that a closed
curve ``close'' to the boundary of our domain must exhibit similar
curvature behaviour. We will prove this after we
show how it implies the curvature condition.

Using Lemma \ref{technicallemma}, we can pick a connected $I_{1}\subset\alpha$ such
that${ \int_{I_{1}}\kappa_{\alpha}<-\pi}$. 
%%Here, we have equipped both $\alpha$ and $\beta$ with the standard
%%orientation induced from $\mathbb{R}^{2}$. 
%Without loss of generality,
%we can assume 
%%Note that $T_2$ is orientation preserving since it is a differentiable OTM
%%(so in particular $\det D T>0$ as $DT$ is the Hessian of a $C^2$
%%convex function). 
%To see that,
%since $\int_{\alpha}\kappa_{\alpha}=2\pi$ there is clearly a connected 
%$J_{1}\subset\alpha$ such that $\int_{J_{1}}\kappa_{\alpha}>\pi$.
%Thus, if $T_2$ were orientation reversing, then we could reverse the original
%orientation of $\alpha$ (thereby making $T_2:\alpha\to\beta$
%orientation preserving) and still have a connected subset of $\alpha$
%with signed curvature less than $-\pi$, namely $J_{1}$.
Let $|I_{1}|=l>0$
be the length of $I_{1}$. We denote a unit speed parametrization
for $I_{1}$ by $\alpha_{1}:[0,l]\to I_{1}$. Since $T_2$ is
an optimal map from $\nu$ to $\mu$, we have from monotonicity
\cite[Proposition 2.24]{Villani} that for any two points $x,y\in\alpha$,
$\langle x-y,T_2(x)-T_2(y)\rangle\geq0$. In particular, for
every $t\in[0,l)$ and for a suitably small $h>0$, we must have that
\[
\langle\alpha_{1}(t+h)-\alpha_{1}(t),T_2(\alpha_{1}(t+h))-T_2(\alpha_{1}(t))\rangle\geq0
.\]
 Dividing by $h^{2}$ and letting $h\to0$ in the previous equation, we get that 
\begin{equation}\label{2}
\langle\dot{\alpha_{1}}(t),\dot{\beta_{1}}(t)\rangle\ge0
\end{equation}
where $\beta_{1}=T_2\circ\alpha_{1}$ is a parametrization of
$T_{2}(I_{1})$. Note ${\dot\beta_{1}(t)\neq0}$ as $T_2$ is locally
a diffeomorphism around every $\alpha_{1}(t)\in\Lambda$. 

Equation \eqref{2} implies that the tangent vectors to $\alpha$
and $\beta$ at any $x\in\alpha$ and $T_2(x)\in\beta$ must have
a non-negative inner product. We show that this cannot happen, thereby
proving Theorem \ref{mainthm} (modulo the proof of Lemma \ref{technicallemma}). Intuitively,
it is clear that this cannot happen: as we move along $I_{1}$ counterclockwise, the
tangent vector at a point along $I_{1}$ rotates clockwise, while the tangent vector at the corresponding point on $\beta$ rotates
counterclockwise. Monotonicity dictates that the angle between the
corresponding vectors must always be within $\frac{\pi}{2}$. However,
since the curvature of $I_{1}$ is less than $-\pi$, and the curvature
of the corresponding connected subset of $\beta$ is $\geq0$ by
convexity, the angle between corresponding tangent vectors changes
by more than $-\pi$ when traversing $I_{1}$, and therefore, cannot
lie in $[-\frac{\pi}{2},\frac{\pi}{2}]$ at all points of $I_{1}$. 

To make the above discussion more precise, consider the ``tail-to-tail''
angle between two vectors. This is the standard notion of angle which takes
values in the interval $(-\pi,\pi]$. Given an ordered pair of vectors,
we define the angle between them as the signed ``tail-to-tail angle''
between them, with the sign taken to be positive if we move counterclockwise
from the first vector to the second and negative otherwise. We now
define $f:[0,l]\to(-\pi,\pi]$, where $f(t)$ is the angle from $\dot{\beta_{1}}(t)$ to $\dot{\alpha_{1}}(t)$. From \eqref{2}
it follows that $f(t)\in[-\frac{\pi}{2},\frac{\pi}{2}]$ for every
$t\in[0,l]$. Set $J_{t}=\alpha_{1}([0,t])\subset I_{1}$. Then 
\begin{equation}
f(t)-f(0)=\int_{J_{t}}\kappa_{\alpha}-\int_{T_2(J_{t})}\kappa_{\beta}+2k(t)\pi\label{3}
\end{equation}
 where $k(t)\in\mathbb{Z}$.

Since $\alpha$ and $\beta$ are $C^{2}$, the unsigned angle between
$\dot{\alpha_{1}}(t)$ and $\dot{\beta_{1}}(t)$ defined from $[0,l]$
to $[0,\infty)$ varies continuously. Therefore, any discontinuities
in the signed angle $f(t)$ can occur only near the values $-\pi$
and $\pi$. But since $f(t)\in[-\frac{\pi}{2},\frac{\pi}{2}]$, it
is never close to $\pi$ or $-\pi$ and therefore must be continuous
everywhere on $[0,l]$. Hence $2k(t)\pi$ must also be continuous.
But $k(t)$ is integer-valued, so it is the constant $k(0)=0$. In
particular \eqref{3} yields 
\begin{equation}
f(t)=\int_{J_{t}}\kappa_{\alpha}-\int_{T_2(J_{t})}\kappa_{\beta}+f(0).\label{eq:4}
\end{equation}
 Setting $t=l$ in \eqref{eq:4}, we get 
\begin{eqnarray}
f(l)-f(0)  =  \int_{J_{l}}\kappa_{\alpha}-\int_{T_2(J_{l})}\kappa_{\beta}\label{eq:5}
 = \int_{I_{1}}\kappa_{\alpha}-\int_{T_2(I_{1})}\kappa_{\beta}\nonumber 
  <  -\pi+0 =  -\pi,\nonumber 
\end{eqnarray}
 where the inequality holds since $\int_{I_{1}}\kappa_{\alpha}(x)dx<-\pi$
and $\int_{J}\kappa_{\beta}(x)dx\geq0$ for every $J\subset\beta$
as $\beta$ is the boundary of a convex set. On the other hand $f(t)-f(0)\in[-\pi,\pi]$
since $f(t)\in[-\frac{\pi}{2},\frac{\pi}{2}]$ for all $t\in[0,l]$.
This contradicts $f(l)-f(0)<-\pi$. Hence our assumption that $T_1$
is continuous must be incorrect, and $T_1$ is discontinuous as
desired. 
\end{proof}
\begin{proof}[Proof of Lemma \ref{technicallemma}]
Recall $\Gamma_{\epsilon}=\{x\in\Lambda:$ dist$(x,\partial\Lambda)=\epsilon\}$,
and $\alpha$ contains $\Gamma_{\epsilon}$ in its interior in the
sense that every continuous path between $\Gamma_{\epsilon}$ and
$\partial\Lambda$ must intersect $\alpha$. Also recall that by assumption,
${ \int_{I}\kappa_{\partial\Lambda}<-\pi}$ for some connected
subset $I\subset\partial\Lambda$. 
% While the statement of Lemma \ref{technicallemma} is
%essential to our proof of the curvature condition, the proof of Lemma \ref{technicallemma} mostly technical, and does not provide much additional insight
%into our main problem. 
The underlying idea is to choose a connected
subset of $\alpha$ which is close to the connected subset $I$
of $\partial\Lambda$, and then show that this subset must necessarily
contain a further connected subset of signed curvature less than
$-\pi$. We have illustrated the arguments made below in Figure \ref{fig:2}.

Let $A$ and $B$ be the endpoints of $I$. Let $\overline{\epsilon}>0$.
Then pick $C\in I$ close to $A$ so that the angle between $AC$
and the tangent to $\partial\Lambda$ at $A$ is less than $\frac{\overline{\epsilon}}{2}$.
Similarly pick some $D\in I$ close to $B$ which satisfies the same
criterion. For $\epsilon>0$ small, consider the curve $\Gamma_{\epsilon}$.
Recall $f_{\epsilon}:\Gamma\to\Gamma_{\epsilon}$ maps points on $\Gamma$
to points $\epsilon$ away on $\Gamma_{\epsilon}$. Now $A_{2}=f_{\epsilon}(A)$, $B_{2}=f_{\epsilon}(B)$, $C_{2}=f_{\epsilon}(C)$ and $D_{2}=f_{\epsilon}(D)$
are points on $\Gamma_{\epsilon}$. By selecting $\epsilon>0$ small
enough, we can ensure that for any $X\in AA_{2}$ and any $Y\in CC_{2}$
, the angle between $AA_{2}$ and $XY$ belongs to the interval $(\frac{\pi}{2}-\overline{\epsilon},\frac{\pi}{2}+\overline{\epsilon})$
, and also that for any $X_{1}\in BB_{2}$ and $Y_{1}\in DD_{2}$,
the angle between $BB_{2}$ and $X_{1}Y_{1}$ belongs to the interval
$(\frac{\pi}{2}-\overline{\epsilon},\frac{\pi}{2}+\overline{\epsilon})$.
This is equivalent to saying that the direction of $XY$ differs by
no more than $\overline{\epsilon}$ from the direction of $\partial\Lambda$
at the point $A$ , and the direction of $X_{1}Y_{1}$ differs by
no more than $\overline{\epsilon}$ from the direction of $\partial\Lambda$
at the point $B$ . 

Next, from Proposition \ref{App3}, we have that there exists
a connected subset $\tilde{I}\subset\alpha$ which is contained in the region
bounded by $I$, $AA_{2}$, $BB_{2}$ and $\Gamma_{\epsilon}$ , and
which has endpoints $A_{1}\in AA_{2}$ and $B_{1}\in BB_{2}$. We
move along $\tilde{I}$ from $A_{1}$ to $B_{1}$ and denote by $C_{1}$
the point where $\tilde{I}$ intersects $CC_{2}$ for the first time.
We denote by $D_{1}$ the point where $\tilde{I}$ intersects $DD_{2}$
for the last time. From Proposition \ref{App2}, we further
know that there exists a point $E_{1}$ lying on the portion of
$\tilde{I}$ between $A_{1}$ and $C_{1}$ at which the direction
of the tangent to $\tilde{I}$ coincides with the direction of $A_{1}C_{1}$.
In particular, the angle between the tangent to $\tilde{I}$ at $E_{1}$
and the segment $AA_{2}$ is in the interval $(\frac{\pi}{2}-\overline{\epsilon},\frac{\pi}{2}+\overline{\epsilon})$. Similarly, we can choose a point $F_{1}$ that lies on the portion
of $\tilde{I}$ connecting $D_{1}$ to $B_{1}$ such that the angle
between the tangent to $\tilde{I}$ at $F_{1}$ and $BB_{2}$ is in
the interval $(\frac{\pi}{2}-\overline{\epsilon},\frac{\pi}{2}+\overline{\epsilon})$.

\begin{figure}
\includegraphics{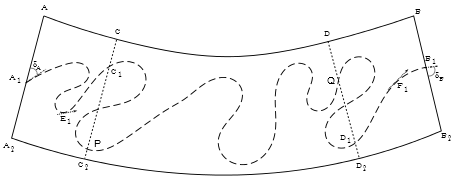}
\caption{The curve $A_{1}B_{1}\subset\alpha$ enclosed by the region of negative curvature.}
\label{fig:2}
\end{figure}

Finally, we are in a position to establish the existence of the interval
$I_{1}\subset\tilde{I}$ for which ${ \int_{I_{1}}\kappa_{\alpha}<-\pi}$ with respect to the unit normal pointing inside the bounded component of the complement of the Jordan curve $\alpha$. Equivalently, it is the signed curvature with respect to the standard orientation on $\mathbb{R}^{2}$ and with $I_{1}$ oriented from $B_{1}$ to $A_{1}$. We will always use this sign of curvature for (connected components) of $\tilde{I}$. 
Denote by $\delta_{A}$ the angle between the tangent to $\tilde{I}$
at the point $A_{1}$ and the vector $A_{1}A$ and by $\delta_{B}$, the angle between the vector $B_{1}B_{2}$ and the tangent to $\tilde{I}$. In this definition, we have used the tangent vector to $\tilde{I}$ when it is oriented from $A_1$ to $B_1$.  Note that both $\delta_{A}$ and $\delta_{B}$ are in $[0,\pi]$. We now apply the Gauss-Bonnet theorem to the
region (with its boundary oriented counterclockwise) bounded by $I$ , $\tilde{I}$ , $AA_{1}$ and $BB_{1}$  to
get that 
\begin{equation}\label{eq:6}
\int_{I}\kappa_{\partial\Lambda}+\frac{\pi}{2}+(\pi-\delta_{A})-\int_{\tilde{I}}\kappa_{\alpha}+(\pi-\delta_{B})+\frac{\pi}{2}=2\pi.
\end{equation}

Note the negative sign in front of the integral over $\tilde{I}$, which comes from the fact that the orientation of $\tilde{I}$ in this calculation
is from $A_1$ to $B_1$, which is the opposite of what we had originally used (i.e. from $B_1$ to $A_1$) in computing $\int_{\tilde{I}}\kappa_{\alpha}$. Simplifying \eqref{eq:6}, we have $\int_{\tilde{I}}\kappa_{\alpha}=\int_{I}\kappa_{\partial\Lambda}+\pi-\delta_{B}-\delta_{A}$
We will now split $\tilde{I}$ into three parts and show that some
connected combination of these three parts has a total signed curvature
lesser than $-\pi$ with the original orientation i.e. with $\tilde{I}$
going from $B_{1}$ to $A_{1}$. Note that the points $E_{1}$ and
$F_{1}$ provide such a splitting naturally. Denote these three components of $\tilde{I}$ between
$A_{1}$ and $E_{1}$, $E_{1}$ and $F_{1}$, and $F_{1}$ and $B_{1}$
by $\tilde{I}_{1}$, $\tilde{I}_{2}$, $\tilde{I}_{3}$ respectively.
For some $\hat{\epsilon}_{1},\hat{\epsilon}_{3}\in(-\overline{\epsilon},\overline{\epsilon})$,
\[
\int_{\tilde{I}_{1}}\kappa_{\alpha}=\left(\frac{\pi}{2}+\hat{\epsilon}_{1}\right)-\delta_{A}+2k\pi
\]
 for some integer $k$, and 
\begin{equation*}
\int_{\tilde{I}_{3}}\kappa_{\alpha} = (\pi-\delta_{B}) - \left(\frac{\pi}{2}-\hat{\epsilon}_{3}\right)+2k'\pi
 = \frac{\pi}{2}-\delta_{B}+2k'\pi+\hat{\epsilon}_{3}
\end{equation*}
 for some integer $k'$.

If $k<0$, then 
\begin{equation*}
\int_{\tilde{I}_{1}}\kappa_{\alpha}<-\frac{3\pi}{2}+\overline{\epsilon}.
\end{equation*}
Taking $\overline\epsilon$ small enough, we get
\begin{equation*}
 \int_{\tilde{I}_{1}}\kappa_{\alpha}<-\pi
\end{equation*}
in which case $\tilde{I_{1}}$ is the desired segment.

If $k>0$, then 
\begin{align*}
\int_{\tilde{I}\backslash\tilde{I}_{1}}\kappa_{\alpha} & =  \int_{I}\kappa_{\partial\Lambda}+\pi-\delta_{B}-\delta_{A}-\int_{\tilde{I}_{1}}\kappa_{\alpha}\\
 & <  -\pi+\pi-\delta_{B}-\delta_{A}-\frac{\mbox{\ensuremath{\pi}}}{2}+\delta_{A}-2\pi+\overline{\epsilon}\\
 & = -\delta_{B}-\frac{5}{2}\pi+\overline{\epsilon} < -\pi,
\end{align*}
 as long as we take $\overline{\epsilon}>0$ sufficiently small. Hence,
in this case we have that $\tilde{I}\backslash\tilde{I}_{1}=\tilde{I}_{2}\cup\tilde{I}_{3}$
has total curvature less than $-\pi$.

Similarly, if $k'\neq0$, then using one of the arguments above,
we can take $I_{1}$ to be either $\tilde{I}_{3}$ or $\tilde{I}_{1}\cup\tilde{I}_{2}$.

Thus, the only case left to investigate is when $k=k'=0$. In this
case, 
\begin{equation*}
\int_{\tilde{I}_{1}}\kappa_{\alpha}=\frac{\pi}{2}-\delta_{A}+\hat{\epsilon}_{1} \text{\qquad and \qquad} \int_{\tilde{I}_{3}}\kappa_{\alpha}=\frac{\pi}{2}-\delta_{B}+\hat{\epsilon}_{3}.
\end{equation*}
Combining these two equations, we get that 
\begin{align*}
\int_{\tilde{I}_{2}}\kappa_{\alpha} & =  \int_{\tilde{I}}\kappa_{\alpha}-\int_{\tilde{I}_{1}}\kappa_{\alpha}-\int_{\tilde{I}_{3}}\kappa_{\alpha}\\
& =\int_{I}\kappa_{\partial\Lambda}+\pi-\delta_{B}-\delta_{A}-\frac{\pi}{2}+\delta_{A}-\frac{\pi}{2}+\delta_{B}-\hat{\epsilon}_{1}-\hat{\epsilon}_{3}\\
 & <  \int_{I}\kappa_{\partial\Lambda}+2\overline\epsilon.
\end{align*}
In particular, if we choose $\overline{\epsilon}<-\frac{\int_{I}\kappa_{\partial\Lambda}+\pi}{2}$, then, it follows that $\int_{\tilde{I}_{2}}\kappa_{\alpha}\le\int_{I}\kappa_{\partial\Lambda}+2\overline{\epsilon}<-\pi$
and thus in this case, $\tilde{I}_{2}$ satisfies the claim. This
completes the proof of the lemma.
\end{proof}

\begin{proof}[Proof of Theorem \ref{maincor}] 
Assume the existence of such $T_1$. 
We follow the notation established in the proof of Theorem \ref{mainthm}. 
%and may assume that $T_1$ is orientation preserving.
Pick $J\subset\partial\Lambda$
so that $\int_{J}\kappa_{\partial\Lambda}\le\inf_{I\subset\partial\Omega}\int_{I}\kappa_{\partial\Omega}-\pi$. We can choose such a $J\subset\partial\Lambda$ because by assumption, $\inf_{J\subset\partial\Lambda}\int_{J}\kappa_{\partial\Lambda}\le\inf_{I\subset\partial\Omega}\int_{I}\kappa_{\partial\Omega}-\pi$, and the infimum on the left hand side is attained because $\partial\Lambda$ is compact.
Let $\gamma:[0,l]\to J$ be the unit speed parametrization of $J$,
so that $\gamma''(t)=\kappa_{\partial\Lambda}(\gamma(t))\textbf{n}_{\partial\Lambda}(\gamma(t))$ where $\textbf{n}_{\partial\Lambda}(\gamma(t))$ is the inward pointing unit normal at $\gamma(t)$ and $l$ is the
length of $J$.
From monotonicity (recall \eqref{2}), $f(t)\in[-\frac{\pi}{2},\frac{\pi}{2}]$
for all $0\leq t\leq l$.
But we also have that for every $t\in[0,l]$,
\[
f(t)-f(0)=\int_{\gamma([0,t])}\kappa_{\partial\Lambda}-\int_{T_1^{-1}(\gamma([0,t]))}\kappa_{\partial\Omega}+2k(t)\pi
\]
where as before $k(t)$ is an integer that accounts for the discontinuity that amounts from restricting the codomain of $f$
to $(-\pi,\pi]$. Since $f(t)\in[-\frac{\pi}{2},\frac{\pi}{2}]$,
$k = 0$. Thus
\[
\int_{\gamma([0,t])}\kappa_{\partial\Lambda}-\int_{T_1^{-1}(\gamma([0,t]))}\kappa_{\partial\Omega}=f(t)-f(0).
\]
On the other hand, from the choice of $J\subset\partial\Lambda$,
we have that
\begin{eqnarray*}
\int_{T_1^{-1}(\gamma([0,l]))}\kappa_{\partial\Omega}-\int_{\gamma([0,l])}\kappa_{\partial\Lambda} =  \int_{T_1^{-1}(J)}\kappa_{\partial\Omega}-\int_{J}\kappa_{\partial\Lambda}
  \ge \inf_{I\subset\partial\Omega}\int_{I}\kappa_{\partial\Omega}-\int_{J}\kappa_{\partial\Lambda}\ge \pi.
\end{eqnarray*}

To conclude the proof, we claim that
$f(t)\in(-\frac{\pi}{2},\frac{\pi}{2})$. To see this, observe that
for any $x,y,z\in\overline{\L}$
$$
\langle x,T_2(x)-T_2(y)\rangle+
\langle y,T_2(y)-T_2(z)\rangle+
\langle z,T_2(z)-T_2(x)\rangle\ge0,
$$
by cyclical monotonicity, or equivalently
$$
\langle z-y,T_2(z)-T_2(y)\rangle \ge
\langle x-z,T_2(y)-T_2(x)\rangle.
$$
Now by Brenier's theorem $T_2=\nabla w$ for some convex
function $w$ on $\L$ which is $C^2$ by our assumptions, and since $\det\nabla^2 w=1$ this
function is strongly convex in the sense that $\nabla^2 w>0$.
Since $T_2$ is $C^1$ we have
$T_2(y)-T_2(x)=DT_2(x)\cdot (y-x)+v(y)$,
where $|v(y)|=o(|x-y|)$. Thus,
$$
\langle z-y,T_2(z)-T_2(y)\rangle\ge
\langle x-z,\nabla^2w(x)\cdot(y-x)+v(y)\rangle.
$$
Now pick a closed disk that is contained in $\overline{\Lambda}$ s.t. 
%and such that $x$ lies on the circle bounding that disk 
the circle bounding the disk is tangent to $\partial\Lambda$ at $x$. Let 
$\tilde{\gamma}$ be a unit speed parametrization of this circle. In 
particular, for some fixed $t_2$ we have that $\tilde{\gamma}(t_2)  = x$. Set $y=\tilde{\gamma}(t_1)$ 
and $z = \tilde{\gamma}(t_3)$ and such that $|y-x|=|z-x|$ or equivalently $2t_2=t_1+t_3$.
Also, set $\delta(t):=T_2\circ\tilde{\gamma}(t)$.
Then,
$$
\langle \tilde{\gamma}(t_3)-\tilde{\gamma}(t_1),
\delta(t_3)-\delta(t_1)\rangle\ge
\langle x-z,\nabla^2w(x)\cdot(y-x)+v(y)\rangle.
$$
Note that for sufficiently small $t_3 - t_1$, there exists a constant $C>0$ such that
$\frac1C|z-y|<t_3-t_1<C|z-y|$ and $|x-y|\le|z-y|\le2|x-y|$.
Thus dividing
both sides of the equation by $(t_3-t_1)^2$ and
taking the limit as $t_3-t_1$ tends to zero gives
$$
\langle\dot{\tilde\gamma}(t_2),\dot\delta(t_2)\rangle
\ge C' \nabla^2 w\langle\nu,\nu\rangle>0,
$$
as $|\nu|=1$ is the tangent vector to the disk at $x$ and $C'>0$. This completes the proof of the theorem.
\end{proof}

\subsection{Examples}\label{examples}

In this section we will explore several concrete examples. The
first one shows that the extended curvature criterion for the non-existence
of OTMs which are diffeomorphisms up to the boundary is sufficient
but not necessary. The second example shows that it is reasonable not to
%%one should not 
hope for a constant better than $-\pi$ in the curvature criterion.  

\begin{example}
Consider the two domains pictured in Figure \ref{fig:3} and suppose there exists an optimal map $T:\overline{\Omega}\to\overline{\Lambda}$,
which is a $C^1$-diffeomorphism. Given $\epsilon>0$ small, we construct
our domains so that $-2\pi\leq\inf_{I\subset\partial\Omega}\int_{I}\kappa_{\partial\Omega}=\inf_{J\subset\partial\Lambda}\int_{J}\kappa_{\partial\Lambda}<-2\pi+\epsilon$.
In particular, the hypotheses of Theorem \ref{maincor} (and, of course, those of Theorem \ref{mainthm}, since $\Omega$ is not convex) 
do not apply here. 
\begin{comment}
On the other hand, note that $\partial\Lambda$ has has 4 disjoint connected subsets $\{J_1,\dots,J_4\}$ of signed curvature $-2\pi+\tilde{\epsilon}$ for some $0<\tilde{\epsilon}<\epsilon$, while the integral of the curvature over all the negatively curved parts of 
%the negative part of the curvature of 
$\partial\Omega$ is greater than $-2\pi$. Hence, it cannot happen that each of the intervals $J_j$ is mapped to an interval in $\partial\Omega$ with total curvature lesser than $-\frac{\pi}{2}$. Indeed, since the map $T$ is assumed to be a $C^1$-diffeomorphism up to the boundary, this would imply that the integral of the curvature over the negatively curved parts of $\partial\Omega$ is lesser than $-2\pi$. Given this, it follows from exactly the same argument that we used in the proof
of Theorem \ref{mainthm} that the ``tail-to-tail'' angle between tangent vectors
at some $x\in I$ and $T(x)\in J$ lies outside $[-\frac{\pi}{2},\frac{\pi}{2}]$,
and hence, that cyclic monotonicity is violated. This shows that $T$
cannot be optimal, and we have arrived at a contradiction.
\end{comment}

Note that $\partial\Lambda$ has 4 disjoint connected subsets $\{J_{1},\dots,J_{4}\}$
of signed curvature $-2\pi+\tilde{\epsilon}$ for some $0<\tilde{\epsilon}<\epsilon$,
while $\partial\Omega$ has only one negatively curved component,
with signed curvature no lesser than $-2\pi$. In particular, the
integral of $\kappa_{\partial\Omega}$ over any union of connected
subsets of $\partial\Omega$ cannot be lesser than $-2\pi$. 

As before, we show that an OTM $T$ (as above) cannot exist, by showing
that it violates the monotonicity condition for OTMs. Indeed, by the
``tail-to-tail'' argument used in the proof of Theorem 2, we have
that 
\[
-\pi\leq\int_{J_{i}}\kappa_{\partial\Lambda}-\int_{T(J_{i})}\kappa_{\partial\Omega}<-2\pi+\tilde{\epsilon}-\int_{T(J_{i})}\kappa_{\partial\Omega}
\]

so that 
\[
\int_{T(J_{i})}\kappa_{\partial\Omega}<-\pi+\tilde{\epsilon}<-\pi+\epsilon
\]

Note that the images $T(J_{i})$ are disjoint. Therefore, from the
above discussion, we have 
\[
-2\pi\leq\int_{\cup T(J_{i})}\kappa_{\partial\Omega}=\sum_{i=1}^{4}\int_{T(J_{i})}\kappa_{\partial\Omega}<-4\pi+4\epsilon<-2\pi
\]

for $\epsilon$ small enough, which is a contradiction. This completes
the proof of the non-optimality of $T$. 

%OLD:
%At
%the same time, $\partial\Lambda$ 
%while $\partial\Omega$ has only one such connected subset. %It
%follows that for any $C^1$ diffeomorphism between the two boundaries, at
%least one of these 4 disjoint subsets (which we denote by $J$)
%must get mapped to a connected subsets $I$ of signed curvature
%$\geq-\frac{\pi}{2}$. Indeed, $\partial\Lambda$ only %contains
%one connected component of negative curvature, and the total
%curvature on that component is bigger than
%$-2\pi$. 
%But since $\int_{I}\kappa_{\partial\Omega}-\pi>\int_{J}\kappa_{\partial\Lambda}$,
 
\end{example}

%The preceding example shows that the extended curvature criterion
%is a necessary, but not sufficient condition for the optimal map to
%be a $C^1$-diffeomorphism everywhere on the closure of the domains. 

%The next question we ask is whether we can improve our bound in the curvature
%criterion by choosing a constant greater than $-\pi$. The following
%example shows that we cannot do so, at least under the relaxed assumptions
%that the boundaries $\partial\Omega$ and $\partial\Lambda$ be only
%piecewise $C^{3}$. 

\begin{figure}
\includegraphics[scale=0.35]{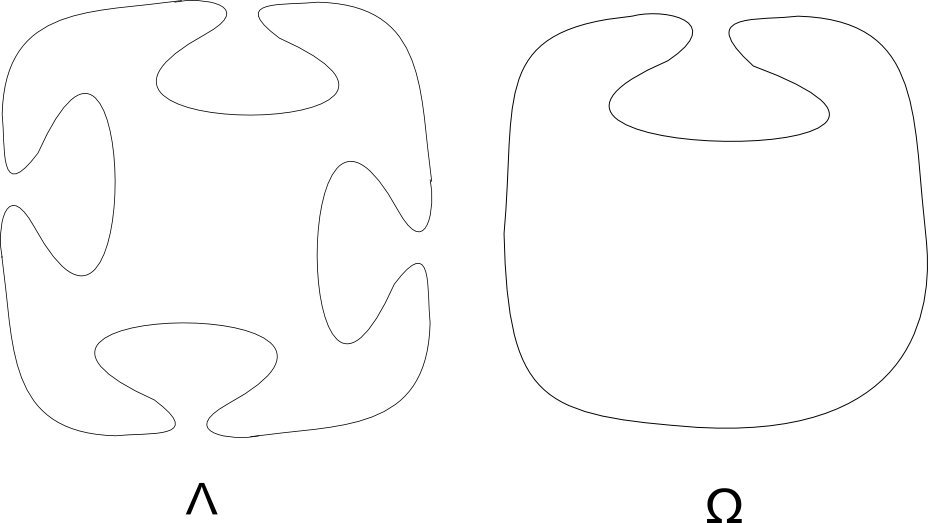}
\caption{Both domains have boundary segments that are equally negatively curved,
but the OTM is not a smooth diffeomorphism up to the boundary.}
\label{fig:3}
\end{figure}

\begin{example}
Let $\Omega=\{(x,y)\in\mathbb{R}^{2}|x>0,1<x^{2}+y^{2}<2$\} be a
half annulus and $\Lambda=\{(x,y)\in\mathbb{R}^{2}|x>0,x^2+y^{2}<1\}$
be the unit half disk. We equip them with the restricted Lebesgue
measure. Note that this equips both $\Omega$ and $\Lambda$ with
probability measures. Further note that $\Lambda$ is convex, while
$\Omega$ has a segment of curvature (the inner half circle) $-\pi$. Also observe that both boundaries are piecewise
smooth. We will show that the OTM $T:\Omega\to\Lambda$ is continuous
on the interior of $\Omega$. 

Indeed we expect the map to preserve the points radially and to ``contract''
the half annulus to the half disk by preserving the area. Such a map
would take the form 
\[
T(x,y)=\left(x\sqrt{1-\frac{1}{x^{2}+y^{2}}},y\sqrt{1-\frac{1}{x^{2}+y^{2}}}\right).
\]

It is immediate to see $T$ is a diffeomorphism between $\Omega$
and $\Lambda$ with $\det DT=1$, so that $T$ is area preserving.
Further, $T=\triangledown\varphi$ where $\varphi:\Omega\to\mathbb{R}$
is the smooth convex function given by 
\[
\varphi(x,y)=\frac{\sqrt{x^{2}+y^{2}}\sqrt{x^{2}+y^{2}-1}-\log\left(\sqrt{x^{2}+y^{2}}+\sqrt{x^{2}+y^{2}-1}\right)}{2}.
\]
But now $T$ is the gradient of a convex function, and is also area
preserving. By Brenier's theorem \cite[Theorem 2.12]{Villani}, it
is the unique OTM between $\Omega$ and $\Lambda$.
\end{example}

\section{The squareman \label{squareman}}
\label{SqManSec}

One of the characteristics of the subject of optimal transport is
that despite many deep results on existence and regularity of OTMs,
it is still very hard to explicitly compute the OTM in almost any non-trivial example. Caffarelli gave an example of a discontinuous OTM to show that without convexity of the target his regularity theory could break down \cite{Caf1992} (see also \cite[Theorem 12.3]{Villani:oldandnew}). He showed that the OTM between a disk and two half disks connected
via a sufficiently thin bridge is discontinuous. However, 
it is not exactly clear how ``thin'' the bridge should be and where
and how the discontinuity arises. In this section we will consider
an example very close to Caffarelli's example, and hopefully provide
the reader with some intuition of what the map looks like.
In particular, we will discuss where and how the discontinuity arises
in this example, and make some qualitative statements about the extent
of this discontinuity. Our computation relies on results due to Caffarelli and Figalli and we begin by recalling some of these results.

Recall that if $\mu$ and $\nu$ are two probability measures (not
necessarily uniform) supported on $\Omega$ and $\Lambda$ respectively,
then we denote by $T_1$ the optimal map which transports $\mu$
to $\nu$. Similarly, $T_2$ is the optimal map that transports
$\nu$ to $\mu$. In our setting, where $\mu,\nu$ are sufficiently
regular measures supported on domains in $\mathbb{R}^{2}$ and the
transportation cost is the quadratic cost, we have that $T_2=\nabla\phi$
for some convex function $\phi$ on $\Lambda$ and $T_1=\nabla\phi^{*}$
on $\Omega$, where $\phi^{*}$ is the Legendre transform of $\phi$
\cite[Theorem 2.12]{Villani}. Let $\gamma$ be the optimal transportation
plan between $\mu$ and $\nu$ i.e. $\gamma=(Id\times T_1)_\#\mu$.
We will need the following short lemma:

\begin{lemma}\label{shortlemma}
Assume that $\mu$ and $\nu$ are uniform measures supported on $\tilde{\Omega}$ and $\tilde{\Lambda}$ respectively, where $\tilde{\Omega}$ and $\tilde{\Lambda}$ are bounded, connected, simply connected, open domains in $\mathbb{R}^{2}$.  
Let $\tilde{\Omega}$ be convex. Then the OTM from $\tilde{\Lambda}$ to $\tilde{\Omega}$, denoted by $T$, is smooth. Further,
$T$ is a diffeomorphism between $\tilde{\Lambda}$ and an open set
$\tilde{\Omega}'\subset\tilde{\Omega}$ of full measure.
\end{lemma}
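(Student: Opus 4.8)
\textbf{Proof proposal for Lemma \ref{shortlemma}.}

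The plan is to invoke Caffarelli's regularity theory in essentially the same way it is used at the start of the proof of Theorem \ref{mainthm}, and then upgrade the resulting regularity statement to the claimed diffeomorphism-onto-a-full-measure-subset assertion. First I would write $T_2 = \nabla \phi$ for the OTM from $\nu$ to $\mu$ and $T_1 = \nabla \phi^*$ for the OTM from $\mu$ to $\nu$, where $\phi$ is convex on $\tilde\Lambda$ and $\phi^*$ is its Legendre transform; this is Brenier's theorem. Since the \emph{target} $\tilde\Omega$ is convex and both measures are uniform (hence have densities bounded away from $0$ and $\infty$ on their supports), Caffarelli's interior regularity theorem (cited as \cite[Theorem 12.50]{Villani:oldandnew} in the proof of Theorem \ref{mainthm}) applies to $T = T_2 = \nabla\phi$: the potential $\phi$ is $C^{2,\alpha}_{\mathrm{loc}}$, in fact smooth by bootstrapping the Monge--Amp\`ere equation $\det \nabla^2\phi = \rho_{\tilde\Lambda}/\rho_{\tilde\Omega}\circ\nabla\phi$ (here the densities are constants), so $T$ is smooth on $\tilde\Lambda$. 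Moreover the Monge--Amp\`ere equation forces $\det \nabla^2\phi > 0$ everywhere on $\tilde\Lambda$, so $DT = \nabla^2\phi$ is everywhere nonsingular and $T$ is a local diffeomorphism on the open set $\tilde\Lambda$.

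Next I would argue injectivity. The map $T = \nabla\phi$ with $\phi$ strictly convex (strict convexity follows from $\nabla^2\phi>0$ on the connected open set $\tilde\Lambda$) is injective: if $\nabla\phi(y_1) = \nabla\phi(y_2)$ then along the segment $[y_1,y_2]\subset\tilde\Lambda$ (using convexity of $\tilde\Lambda$? no — $\tilde\Lambda$ need not be convex) one cannot immediately conclude. So instead I would use the optimal-transport structure: $T_2$ is a.e. injective because $T_1\circ T_2 = \mathrm{id}$ $\nu$-a.e. and $T_2\circ T_1 = \mathrm{id}$ $\mu$-a.e. (cited as \cite[Theorem 2.12]{Villani} / \cite[Theorem 2.12]{Villani:oldandnew}); combined with continuity of $T_2$ on $\tilde\Lambda$ and the fact that $T_2$ is an open map (being a local diffeomorphism), an argument like the one given in the proof of Theorem \ref{mainthm} upgrades "injective a.e." to "injective everywhere" on $\tilde\Lambda$. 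Concretely: a local diffeomorphism that is injective on a dense subset of a connected domain is injective (if $T_2(y_1)=T_2(y_2)$ with $y_1\neq y_2$, pick disjoint neighborhoods $U_i\ni y_i$ on which $T_2$ is a diffeomorphism, shrink so $T_2(U_1)=T_2(U_2)=:V$; then $T_2|_{U_1}^{-1}$ and $T_2|_{U_2}^{-1}$ are two distinct continuous right inverses on the open set $V$, contradicting that $T_1$ is the a.e.-unique right inverse and $V\cap(\text{full measure set})$ is dense). Hence $T$ is an injective local diffeomorphism, i.e. a diffeomorphism onto its image $\tilde\Omega' := T(\tilde\Lambda)$, which is open in $\tilde\Omega$.

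Finally I would check that $\tilde\Omega'$ has full measure in $\tilde\Omega$. Since $T$ is smooth with $\det DT = \rho_{\tilde\Lambda}/\rho_{\tilde\Omega} = |\tilde\Omega|/|\tilde\Lambda| > 0$ constant and $T_\#\nu = \mu$, the change of variables formula gives $|\tilde\Omega'| = \int_{\tilde\Omega'}dx = \int_{\tilde\Lambda}\det DT\, dy = \tfrac{|\tilde\Omega|}{|\tilde\Lambda|}|\tilde\Lambda| = |\tilde\Omega|$; since $\tilde\Omega'\subset\tilde\Omega$ and they have equal (finite) Lebesgue measure, $\tilde\Omega\setminus\tilde\Omega'$ is null. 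This completes the proof.

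\medskip

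I expect the main obstacle to be the injectivity-everywhere step: Caffarelli's theorem is a local (interior) statement and does not by itself rule out $T_2$ folding $\tilde\Lambda$ onto itself, so one genuinely needs to combine the local-diffeomorphism property with the almost-everywhere inverse relations $T_1\circ T_2 = \mathrm{id}$, $T_2\circ T_1 = \mathrm{id}$ — exactly the type of topological argument already deployed (at greater length) in the proof of Theorem \ref{mainthm}. The full-measure claim and the smoothness claim are then routine.
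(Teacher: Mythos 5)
Your proposal follows essentially the same route as the paper: Caffarelli's interior regularity plus the constant-Jacobian Monge--Amp\`ere equation gives a local diffeomorphism, then injectivity is upgraded from the a.e.\ inverse relations $T_1\circ T_2=\mathrm{id}$, $T_2\circ T_1=\mathrm{id}$, and full measure follows from the change-of-variables computation. The only divergence is in the injectivity step: the paper phrases the contradiction in terms of the optimal plan $\gamma$ (two points $b_1\in B_\epsilon(x)$, $b_2\in B_\epsilon(y)$ both paired with each $a$ in the open positive-measure set $A=T(B_\epsilon(x))\cap T(B_\epsilon(y))$, so $\gamma$ cannot be a Monge map), while you phrase it as two distinct continuous local inverses $T_2|_{U_1}^{-1}$ and $T_2|_{U_2}^{-1}$ on $V$ both agreeing a.e.\ with $T_1$ --- which is if anything a cleaner and more precise formulation of the same contradiction, and it correctly sidesteps the strict-convexity-along-segments argument that fails because $\tilde\Lambda$ need not be convex (a caveat the paper also implicitly acknowledges by offering the elementary argument as an alternative to Caffarelli's strict-convexity result).
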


\begin{proof}
Since $\tilde{\Omega}$ is convex, Caffarelli's regularity
theory is applicable. Hence $T$ is smooth on $\tilde{\Lambda}$. By
the Monge--Amp\`ere equation, $\text{det(}DT)=\frac{|\tilde{\Omega}|}{|\tilde{\Lambda}|}=c$
for some $c>0$ almost everywhere on $\tilde{\Lambda}$. Since $T$
is smooth, it follows that $\text{det(}DT)=c>0$ everywhere.
In particular, by the inverse function theorem $T$ is a local
diffeomorphism at every point of $\tilde{\Lambda}$. To show that $T$ is a global diffeomorphism between $\tilde{\Lambda}$
and $T(\tilde{\Lambda})$, we only need to show that $T$ is injective.
This follows, for instance, from Caffarelli's result on strict 
convexity of solutions to the Monge--Amp\`ere equation above, but we also
give an elementary argument.

Indeed, assume on the contrary that there exist $x,y\in\tilde{\Lambda}$ such
that $T(x)=T(y)=z$. Pick $\epsilon>0$ such that $B_{\epsilon}(x)\cap B_{\epsilon}(y)=\emptyset$
and $T$ is a diffeomorphism when restricted separately to both
$B_{\epsilon}(x)$ and $B_{\epsilon}(y)$. Let $A=T(B_{\epsilon}(x))\cap T(B_{\epsilon}(y))$.
Since $A$ is non-empty and open, it must have positive measure. But
then, for every $a\in A$ the set $\{b\in\tilde{\Lambda}:(a,b)\in\text{supp}(\gamma)\}$
contains at least two elements - one from $B_{\epsilon}(x)$ and one
from $B_{\epsilon}(y)$, where $\gamma$ is the optimal transportation plan between $\mu$ and $\nu$. This means that $\gamma$ is not a Monge
map since it sends every element in $A$ to at least two locations,
which contradicts Brenier's theorem. It follows that $T$ is
a global diffeomorphism between $\tilde{\Lambda}$ and $T(\tilde{\Lambda})=\tilde{\Omega}'$.

Note that $\tilde{\Omega}'\subset\tilde{\Omega}$.
Since $T$ is optimal, $|\tilde{\Omega}'|=|T(\tilde{\Lambda})|=|\tilde{\Lambda}|=|\tilde{\Omega}|$
and therefore, $\tilde{\Omega}'$ is a set of full measure. This completes
the proof of the lemma.\end{proof}

Finally, in our example below, we will need two additional properties, which we state now: \\

\noindent Property \textbf{A}: Restrictions of optimal maps are still optimal
between the restricted domain and its image. \cite[Theorem 4.6]{Villani:oldandnew}\\

\noindent Property \textbf{B}: If the optimal map between $\Omega$ and $\Lambda$
is of the form $\triangledown\phi$, then the set $\{x\in\mathbb{R}^{2}|\partial\phi(x)\cap\overline{\Lambda}$
contains a segment$\}$ is empty. \cite[Proposition 3.2]{Figalli}

\subsection{Explicit Example}

We now introduce a specific example we refer to as the squareman.
Let $\mu$ be the uniform probability measure on a rectangle $\Omega$
with sides $|A_{1}B_{1}|=a$ and $|G_{1}A_{1}|=b$, and $\nu$ be
the uniform probability measure on $\Lambda$, which is made up of
two rectangles - a rectangle $\Lambda_{2}$ with sides $|AB|=a$ and
$|GA|=b$, and another rectangle $\Lambda_{1}$ on top of it with
sides $|CD|=c$ and $|ED|=d$, where $d<a$ (see Figure \ref{fig:4}). Note that
since the optimal map is invariant under translations of $\Omega$
and $\Lambda$, it does not depend on the relative positions of the
domains with respect to each other. Our example is similar to the
one by Caffarelli: indeed, if we work with rectangles instead of disks,
then the example by Caffarelli transforms to transporting a rectangle
to an H-shape figure. Due to symmetry, we can divide these figures
into 4 different symmetric parts and look at the OTM for each one.
But this is exactly the example we are considering.

Since $\Omega$ is convex, it follows from Lemma \ref{shortlemma} that the map
$T_2$ is a diffeomorphism onto its image, which is a set of full
measure in $\Omega$. Further, by Proposition \ref{App4}, it follows that there exists a continuous extension of $T_2$ from $\overline{\Lambda}$ to $\overline{\Omega}$. However, since $\Lambda$ is not convex, we
cannot make any such claims about $T_1$. In fact, we will show
below that $T_1$ is discontinuous, and further, give a qualitative
statement of how discontinuous it is. Note that since $\partial\Lambda$ does not contain any connected subset with total signed curvature less than $-\pi/2$, this shows that the condition in Theorem \ref{mainthm} is not necessary. 

Note that Brenier's theorem and Caffarelli's regularity theory are
applicable only to the interiors of $\Omega$ and $\Lambda$. However,
in this particular example, we will take advantage of the fact that
the boundaries of the two domains are parts of straight segments.
Using this, we will be able to identify where parts of the boundary $\partial\Lambda$ are mapped by $T_2$, which will 
%we will be able to extend $T_2$ to a smooth map on
%part of the boundary $\partial\Lambda$. The motivation for doing so is that this could help us in identifying where parts of $\partial\Lambda$are mapped to and 
give us useful information about the discontinuity
of the map $T_1=\nabla\phi^{*}$. Indeed, if we knew that two
points $x_{1}\in EF$ and $x_{2}\in GF$ are mapped to an interior
point $x\in\Omega$, and since $T_2$ is continuous, then $\partial\phi^{*}(x)$
would contain both $x_{1}$ and $x_{2}$, and in particular, the extent
of discontinuity of $T_1$ at $x$ would be at least the distance
between $x_{1}$ and $x_{2}$. 

\begin{figure}
\includegraphics[scale =.7]{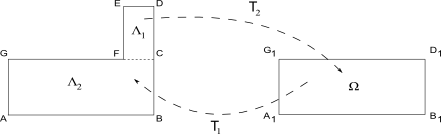}
\caption{The squareman example. The optimal map $T_{1}$ pictured will be seen to be discontinous.}\label{fig:4}
\end{figure}

In the following four steps we will determine how the map $T_2$
behaves on the boundary of $\Lambda$ and this will also give us information
about the behaviour of $T_1$. In particular, we will justify Figure
\ref{fig:5}, in which same-colored segments are mapped to same-colored segments.
The 4 steps are as follows:\\

\noindent
$\emph{Step 1:}$ $T_2(AB)=A_{1}B_{1}$ and $T_2(BD)=B_{1}D_{1}$
and both restrictions are homeomorphisms. Furthermore, $T_2(AG)\subset A_{1}G_{1}$
and $T_2(DE)\subset D_{1}G_{1}$.

\noindent
$\emph{Step 2:}$ We have that either $T_2(G)=G_{1}$ or $T_2(E)=G_{1}$.
In particular, we can assume without any loss of generality that $T_2(G)=G_{1}$, so $T_2(AG)=A_{1}G_{1}$
homeomorphically. We will further show that $T_2(EF)$ lies completely
in $\Omega$ except for, of course, $T_2(E)=E_{1}$ which is on
the boundary.

\noindent
$\emph{Step 3:}$ For some $E'\subset GF$ we have that $T_2(GE')=G_{1}E_{1}$
homeomorphically.

\noindent
$\emph{Step 4:}$ $T_2(E'F)=T_2(EF)$.\\

%We begin with step 1.\\

\noindent
$\emph{Step 1:}$ $T_2(AB)=A_{1}B_{1}$ and $T_2(BD)=B_{1}D_{1}$
and both restrictions are homeomorphisms. Furthermore, $T_2(AG)\subset A_{1}G_{1}$
and $T_2(DE)\subset D_{1}G_{1}$.\\

To get the desired information on $\partial\Lambda$, we are going
to use what will henceforth be referred to as $\emph{the reflection principle}$.
More precisely, we reflect $\Lambda$ with respect to $AB$ to get
a domain of twice the area, $R_{AB}(\Lambda)$ and reflect $\Omega$
with repect to $A_{1}B_{1}$ to get a domain $R_{A_{1}B_{1}}(\Omega)$. See Figure \ref{fig:6}.

\begin{figure}
\includegraphics[scale = .7]{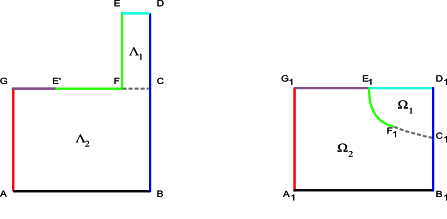}
\caption{How the OTM behaves on the boundary of the domains.}\label{fig:5}
\end{figure}

Let $\mu'$ and $\nu'$ be uniform probability measures on $R_{A_{1}B_{1}}(\Omega)$
and $R_{AB}(\Lambda)$ respectively with respective uniform densities
$f'$ and $g'$. As before, since $R_{A_{1}B_{1}}(\Omega)$ is convex,
it follows from Caffarelli's regularity theory that the ($\nu$-almost
everywhere) unique, optimal map $T'_{2}$ from $R_{AB}(\Lambda)$
to $R_{A_{1}B_{1}}(\Omega)$ is smooth. We claim $T'_{2}(\Lambda)\subset\Omega$.\textbf{ }

For simplicity let $AB$ and $A_{1}B_{1}$ lie on the $x$-axis and
for any $z\in\mathbb{R}^{2}$ let $\overline{z}$ denote its reflection
with respect to the $x$-axis. Due to symmetry, the map $T(x)=\overline{T'_{2}(\overline{x})}$
has the same cost as $T'_{2}$ and by the uniqueness of the OTM,
we get $T'_{2}(x)=\overline{T'_{2}(\overline{x})}$ for a.e. $x$.
But now if $T'_{2}(x)=y$ and $x$ and $y$ are on different sides
of the $x$-axis, we know $T'_{2}(\overline{x})=\overline{y}$. It is immediately
checked that this violates the cyclic monotonicity condition for $x$
and $\overline{x}$. In particular for almost all $x\in\Lambda$ we get
that $T'_{2}(x)\in\Omega$ and since $T'_{2}$ is smooth we conclude
that $T'_{2}(\Lambda)\subset\Omega$. Also note $T'_{2}(AB)\subset A_{1}B_{1}$
since otherwise by the continuity of $T'_{2}$ we could find a ball
around a point on $AB$ whose image under $T'_{2}$ lies completely
above or below the $x$-axis, which as we explained above violates
cyclic monotonicity. Note that there are two key conditions in the
use of the reflection principle. First we need to reflect along a
straight segment. Second we need one of the domains after reflection
to be convex.

From here, since optimality is inherited by restriction (Property
\textbf{A}), and since Brenier's theorem also guarantees almost-everywhere
uniqueness of the optimal map, $T'_{2}|_{\Lambda}$ coincides with
$T_2$ almost everywhere. Since $T'_{2}$ is smooth, this gives
us a smooth extension of $T_2$ to the interior of the segment
$AB$ and $T_2(AB)\subset A_{1}B_{1}$. Note that Proposition \ref{App4} already gives us a continuous extension of $T_2$ to the entire boundary $\partial\Lambda$. However, by using the reflection principle here, we have a smooth extension of $T_2$ to the interior of the segment $AB$, and more importantly, we get information about the image of this extension on the interior of the segment $AB$. Similarly, we get an extension
of $T_2$ to the interior of $BD$ and $T_2(BD)\subset B_{1}D_{1}$.
We will use the same notation for $T_2$ and its continuous extensions
to (parts of) $\partial\Lambda$.

Further, observe we can also use the reflection principle again and
reflect $R_{AB}(\Lambda)$ with respect to the line containing $BD$
to get $R'_{BD}(\Lambda)$ and reflect $R_{A_{1}B_{1}}(\Omega)$ with
respect to the line containing $B_{1}D_{1}$ to get $R'_{B_{1}D_{1}}(\Omega)$.
Now the newly obtained domains are 4 times the size of the original
ones and $R'_{B_{1}D_{1}}(\Omega)$ is still convex. The motivation
for doing so is to include the points $B$ and $B_{1}$ in the interiors
of the domains $R'_{BD}(\Lambda)$ and $R'_{B_{1}D_{1}}(\Omega)$
respectively. Exactly as above due to symmetry and smoothness of the
optimal map, it follows that the optimal map must send $B$ to $B_{1}$.
Therefore using Lemma \ref{shortlemma} we have that $T_2$ maps the half-open
segment $BA$ (with $B$ included) injectively to a (possibly strict) subset of $B_{1}A_{1}$ (with
$B$ mapping to $B_{1}$)

\begin{figure}
\includegraphics[scale = .7]{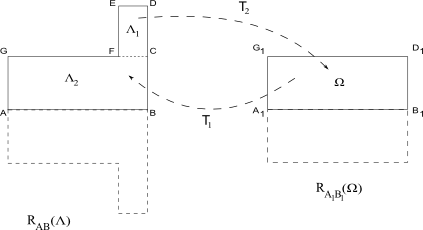}
\caption{The \emph{reflection principle}: We reflect both domains as to make the
boundary part of the interior and use the optimal map is preserved
under restriction.}\label{fig:6}
\end{figure}

Similarly, we can reflect $R_{AB}(\Lambda)$ with respect to the line
containing $AG$ to get $R''_{AG}(\Lambda)$ and reflect $R_{A_{1}B_{1}}(\Omega)$
with respect to the line containing $A_{1}G_{1}$ to get $R''_{A_{1}G_{1}}(\Omega)$.
Using exactly the same arguments as in the preceding paragraph, it
follows that $T_2$ extends to a continuous, injective map on
the entire closed segment $AB$, $T_2(A)=A_{1},T_2(B)=B_{1}$
and $T_2(AB)\subset A_{1}B_{1}$. In fact, since $T_2(A)=A_{1}$
and $T_2(B)=B_{1}$, we have that $T_2(AB)=A_{1}B_{1}$ so
that $T_2$ is a homeomorphism between the closed segments $AB$
and $A_{1}B_{1}$. 

Note that symmetry considerations in the optimal map from $R''_{AG}(\Lambda)$
to $R''_{A_{1}G_{1}}(\Omega)$ lead to the conclusion that the interior
of the segment $AG$ must be mapped to a part of the interior of the
segment $A_{1}G_{1}$. Since we cannot perform any reflection of the
form considered earlier to include $G$ in the interior of the reflected
domain, we cannot claim that $T_2$ can be extended to a continuous
map till $G$ and, in particular, that $T_2(G)=G_{1}$. In fact,
as we will end up showing $T_2(G)$ might be distinct from $G_{1}$.

It is clear now, using exactly the same arguments as above, that $T_2(D)=D_{1}$
and in fact, that $T_2$ is a homeomorphism between the closed
segments $BD$ and $B_{1}D_{1}$. Similarly, it also follows that
$T_2$ extends smoothly to the interior of the segment $DE$ and
sends it to part of the interior of the segment $D_{1}G_{1}$. \\

\noindent
$\emph{Step 2:}$ We have that either $T_2(G)=G_{1}$ or $T_2(E)=G_{1}$.\\

After proving Step 2, we may thus assume (without loss of generality) that $T_2(G)=G_{1}$, so
$T_2(AG)=A_{1}G_{1}$ homeomorphically. We will further show that
$T_2(EF)$ lies completely in $\Omega$ except for, of course,
$T_2(E)=E_{1}$ which is on the boundary.

To check the above first note that $T_2(\overline{\Lambda})\subset\overline{\Omega}$
is of full measure and is compact, so it must be that $T_2(\overline{\Lambda})=\overline{\Omega}$.
Furthermore we know $T_2(\Lambda)\subset\Omega$, so it must be
that $\partial\Omega\subset T_2(\partial\Lambda)$. Since in Step
1 we determined where $\partial\Lambda$ is mapped except for the
segments $GF$ and $EF$ we must have that 
\begin{equation}\label{eq:3.1}
G_{1}\in T_2(GF)\text{\quad or \quad} G_{1}\in T_2(EF).
\end{equation}
Now we will study $T_2(EF)$. Set $\alpha=T_2(EF\cup FC)$. Note $\alpha$ is a continuous curve
from $E_{1}$ to $T_2(C)=C_{1}$. Then $\alpha\cup C_{1}D_{1}\cup D_{1}E_{1}$
is a continuous loop. Note it is simple since if a point $x$ is an
intersection point, then $\partial\phi^{*}(x)$ would contain a segment
in $\overline{\Lambda_{1}}\subset\overline{\Lambda}$, which contradicts
(Property $\textbf{B}$). Hence by the Jordan curve theorem the loop
bounds some open set $\Omega_{1}\subset\Omega$, and we will refer to this loop as $\partial\Omega_{1}$. Note that neither $T_{2}(\Lambda_{2})$ nor $T_{2}(\Lambda_{1})$ can intersect $\partial\Omega_{1}\cap\Omega$ (Property $\textbf{B}$). Also, both $T_{2}(\Lambda_{2})$ and $T_{2}(\Lambda_{1})$ are path connected, since $\Lambda_{1}$ and $\Lambda_{2}$ are path connected and $T_2$ is continuous. In particular, $T_2(\Lambda_{1})$ is either completely contained inside $\Omega_{1}$ or completely contained inside $\Omega\backslash\overline{\Omega_{1}}$, and a similar statement also holds for $T_{2}(\Lambda_{2})$. Finally, note that by Step 1, the intersection of a ball of sufficiently small radius centered at $D_1$ with $\overline{\Omega}$ is contained inside $\overline{\Omega_1}$.

We claim that $T_{2}(\Lambda_{1})\subset\Omega_{1}$. Indeed, suppose this is not so. Then, we must have $T_2(\Lambda_{1})$ is contained in $\Omega\backslash\overline{\Omega_{1}}$ as noted above. We will show that this leads to a contradiction. To see this, choose a continuous path $\gamma$ in $\Lambda_{1}$ with one endpoint at $D$. 
%and an arbitrary $P\in\Lambda_{1}$. 
Since $T_2(D)=D_1$, and the intersection of a sufficiently small ball centered at $D_1$ with $\overline{\Omega}$ is contained inside $\overline{\Omega_1}$, it follows that there exist points in $\gamma$ which must be mapped to $\Omega_1$ by $T_2$, so that we cannot have $T_2(\Lambda_{1})\subset\Omega\backslash\overline{\Omega_{1}}$. Therefore, $T_2(\Lambda_{1})\subset\Omega_{1}$ and in fact, $T_2(\Lambda_{1})$ is of full measure in $\Omega_{1}$ since $T_2(\Lambda)\subset\Omega$ is of full measure in $\Omega$. Since the restriction of an optimal map (Property \textbf{A}) is
optimal we get that $T_{2|\Lambda_{1}}\colon\Lambda_{1}\to\Omega_{1}$
is the optimal map between the two domains.

Now we are in a position to study $T_2(EF)$. Let $T_2(K)=K_{1}$
be the point on $T_2(EF)$ satisfying the following two properties:
(a) it lies on $G_{1}E_{1}$ (b) it has the least distance to $G_{1}$
among all the points on $EF$ whose images under $T_2$ lies on
$G_{1}E_{1}$. We will show that $K_{1}=E_{1}$. 

Observe first that $K_{1}E_{1}\subset T_2(EF)$. Indeed if this
was not the case then for some $M\in EK$ we would have that $T_2(M)\in\Omega$,
but then the pair of points $K,M$ would violate the cyclical monotonicity
condition. Note that even though these points are on the boundary
of $\Lambda$, the continuity of $T_2$ allows us to extend the
cyclical monotonicity condition to these points. Hence $K_{1}E_{1}\subset T_2(EF)$. 

Now, we reflect $\Omega_{1}$ with respect to $K_{1}D_{1}$ and reflect
$\Lambda_{1}$ with respect to $ED$. The reflection of $\Lambda_{1}$
is a convex domain, and therefore, we can use the reflection principle
as before, from which we can conclude that $T_1(K_{1}D_{1})\subset ED$.
Note here we are using the fact that $\overline{\Lambda_{1}}$ is convex to be
able to define $T_1$ everywhere on $\overline{\Omega_{1}}$ (Proposition \ref{App4}). However, we
already knew that $T_2(ED)=E_{1}D_{1}$, so we conclude $K_{1}D_{1}\subset E_{1}D_{1}$,
so $K_{1}=E_{1}$ as desired. 

In particular, we get that $T_2(EF)\cap E_{1}G_{1}=E_{1}$. Applying
the same argument for $T_2(GF)$ we conclude that 
\begin{equation}
T_2(EF)\cap E_{1}G_{1}=E_{1}\:\;\text{ and }\:\; T_2(GF)\cap\overline{G_{1}}G_{1}=\overline{G_{1}}\label{eq:3.2}
\end{equation}
where $\overline{G_{1}}=T_2(G)$. In particular, \eqref{eq:3.1}
and \eqref{eq:3.2} imply that $T_2(E)=E_{1}=G_{1}$ or $T_2(G)=G_{1}$.
Due to symmetry between $E$ and $G$, we can assume without loss
of generality that $T_2(G)=G_{1}$. Further, if $x\in T_2(EF)\cap(\partial\Omega\backslash G_{1}E_{1})$
then since we have covered $\partial\Omega\backslash G_{1}E_{1}$
by boundary segments of $\partial\Lambda\backslash EF \cup GF$ we would get
that $\partial\phi^{*}(x)$ contains a segment in $\overline{\Lambda}$,
which contradicts (Property \textbf{B}). Hence $T_2(EF)$ is in $\Omega$
except for $T_2(E)=E_{1}\in\partial\Omega$. Note that here we
used the fact that the only points of $\Lambda_{1}$ that can be mapped
to the same point are pairs of points on $EF$ and $GF$ since the
segment connecting them intersects $\overline{\Lambda}$ at isolated
points and not segments (Property \textbf{B})\\

\noindent
$\emph{Step 3:}$ For some $E'\subset GF$ we have that $T_2(GE')=G_{1}E_{1}$
homeomorphically.\\

As we explained at the beginning of Step 2 we must have that $\partial\Omega\subset T_2(\partial\Lambda)$.
The only part of $\partial\Lambda$ whose position under $T_2$
we haven't determined yet is $GF$. At the same time we know $G_{1}E_{1}\cap T_2(\partial\Lambda\backslash GF)=\emptyset$,
so we conclude that $G_{1}E_{1}\subset T_2(GF)$. Let $E'\in GF$
be such that $T_2(E')=E_{1}$. It is clear now that by Property
\textbf{B}, $T_2(GE')$ is simple. Further, note that if some
$M\in E'F$ maps to the $G_{1}E_{1}$ then the pair $M,E'$ would
violate the cyclical monotonicity condition. Hence since $G_{1}E_{1}\subset T_2(GF)=T_2(GE'\cup E'F)$
we must have that $G_{1}E_{1}\subset T_2(GE')$. But now $T_2(GE')$
is simple with endpoints $G_{1}$ and $E_{1}$ and it contains $G_{1}E_{1}$,
so it must be that $T_2(GE')=G_{1}E_{1}$ as desired.\\

\noindent
$\emph{Step 4:}$ $T_2(E'F)=T_2(EF)$.\\

Set $\alpha_{1}=T_2(EF)$ and $\alpha_{2}=T_2(FC)$ and note
that both are simple curves (Property
\textbf{B}). Following the notation from above we
have that $\alpha=T_2(EF\cup FC)=\alpha_{1}\cup\alpha_{2}$. Now
$\alpha\cup C_{1}B_{1}\cup B_{1}A_{1}\cup A_{1}G_{1}\cup G_{1}E_{1}$
is a Jordan curve, so it bounds some $\Omega_{2}$. In particular
we have that $\Omega_{1}\cup\Omega_{2}\cup\mathring{\alpha}=\Omega$
where $\mathring{\alpha}$ is the curve without its endpoints. In
Step 2 we showed that $T_2(\Lambda_{1})$ is of full measure in
$\Omega_{1}$. Hence $T_2(\Lambda_{2})\subset\Omega_{2}$ and is
of full measure in $\Omega_{2}$. Therefore $T_{2|\Lambda_{2}}:\Lambda_{2}\to\Omega_{2}$
restricts to an optimal map by Property \textbf{A}. $T_{1|\Omega_{2}}$
is its inverse, so it is optimal as well. 

But now $\Lambda_{2}$ is
convex, so $T_{1|\Omega_{2}}$ is smooth on $\Omega_{2}$ by Caffarelli's
regularity theory and extends continuously to the boundary. Now $T_{1|\Omega_{2}}(\overline{\Omega_{2}})\subset\overline{\Lambda_{2}}$
is compact and of full measure, so $T_1(\overline{\Omega_{2}})=\overline{\Lambda_{2}}$.
Since $T_1(\Omega_{2})\subset\Lambda_{2}$ we conclude that $\partial\Lambda_{2}\subset T_1(\partial\Omega_{2})$.
We know that $T_2$ sends $\partial\Lambda_{2}\backslash E'F$
to $\partial\Omega_{2}\backslash\alpha_{1}$ homeomorphically by the previous three steps. This means $T_1(\partial\Omega_{2}\backslash\alpha_{1})=\partial\Lambda_{2}\backslash E'F$,
so we must have that $E'F\subset T_1(\alpha_{1})$. But note $T_1(\alpha_{1})$
is a simple curve (by Property $\textbf{B}$) that contains $E'F$
and has endpoints $E'$ and $F$. Hence it must be that $T_1(\alpha_{1})=E'F$
homeomorphically, so we conclude that $T_2(E'F)=\alpha_{1}=T_2(EF)$
as desired.

Thus we have completely determined $T_2(\partial\Lambda)$ and
thus we have justified the picture in Figure \ref{fig:5}. Further, we have determined
that the set of discontinuity of $T_1$ is exactly the curve $\alpha_{1}$
and for every $x\in\alpha_{1}$, the subdifferential $\partial\phi^{*}(x)$
is a segment that connects the preimages of $x$ on $E'F$ and $EF$.
In particular, as $x$ moves from $T_1(F)$ to $E_{1}$ this segment
grows and reaches $|EE'|$ as $x$ gets to $E_{1}$.

\appendix

\section{}\label{sec:app}

\begin{prop}
\label{App1}
Assume $\Lambda$ is a bounded simply connected domain
with $C^{2}$ boundary. Then there exists an $\epsilon_{o}>0$ such that for all $0<\epsilon<\epsilon_{o}$, 
$\Gamma_{\epsilon}=\{x\in\Lambda: \h{dist}(x,\partial\Lambda)=\epsilon\}$
is a $C^{1}$ simple curve with a diffeomorphism $f_{\epsilon}:\partial\Lambda\to\Gamma_{\epsilon}$
such that $f_{\epsilon}(x)-x$ is normal to both curves at the points
$x$ and $f_{\epsilon}(x)$ and $|f_{\epsilon}(x)-x|=\epsilon$.
\end{prop}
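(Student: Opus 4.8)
The plan is to recognize this as the classical tubular (collar) neighborhood theorem for a planar domain with $C^2$ boundary, made quantitative, and to carry it out via the normal map. Since $\Lambda$ is bounded and simply connected and $\partial\Lambda$ is a compact $C^2$ one-manifold, $\partial\Lambda$ is a single $C^2$ Jordan curve; fix a unit-speed parametrization $\gamma$ of $\partial\Lambda$ and let $\mathbf n$ be the inward unit normal, which is $C^1$ because $\gamma$ is $C^2$. Define the normal map $\Phi(x,t)=x+t\,\mathbf n(x)$ on $\partial\Lambda\times\mathbb R$; it is $C^1$. First I would show that $\Phi$ restricts to a $C^1$ diffeomorphism from a collar $\partial\Lambda\times(-\epsilon_o,\epsilon_o)$ onto an open neighborhood $U$ of $\partial\Lambda$. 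At $t=0$ the differential of $\Phi$ sends the two coordinate directions to $\gamma'$ and $\mathbf n$, which are orthonormal, so $\Phi$ is a local diffeomorphism near $\partial\Lambda\times\{0\}$ by the inverse function theorem, and by compactness of $\partial\Lambda$ this holds with a uniform collar width. Global injectivity on a (possibly smaller) collar follows from a standard compactness argument: if it failed for every width, one would get $x_n\neq y_n$ in $\partial\Lambda$ and $s_n,t_n\to 0$ with $\Phi(x_n,t_n)=\Phi(y_n,s_n)$; passing to subsequences, $x_n$ and $y_n$ converge to a common point $x$ (because $\Phi(\cdot,0)=\mathrm{id}_{\partial\Lambda}$), contradicting local injectivity of $\Phi$ near $(x,0)$. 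Because $\mathbf n$ is the \emph{inward} normal and $\partial\Lambda\times(0,\epsilon_o)$ is connected, $\Phi(\partial\Lambda\times(0,\epsilon_o))\subset\Lambda$, while $\Phi(\partial\Lambda\times(-\epsilon_o,0))\subset\mathbb R^2\setminus\overline\Lambda$.

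Next I would identify $\Gamma_\epsilon$ with the slice $\Phi(\partial\Lambda\times\{\epsilon\})$. The set $\overline\Lambda\setminus U$ is compact and disjoint from $\partial\Lambda$, so there is $\epsilon_1\in(0,\epsilon_o]$ with $\{x\in\overline\Lambda:\dist(x,\partial\Lambda)<\epsilon_1\}\subset U$. Given $x\in\Lambda$ with $\dist(x,\partial\Lambda)=\epsilon<\epsilon_1$, pick a nearest boundary point $q$; since $q$ minimizes $|x-\cdot|$ along the $C^2$ curve $\partial\Lambda$, the vector $x-q$ is orthogonal to $\partial\Lambda$ at $q$, and it must equal $+\epsilon\,\mathbf n(q)$ rather than $-\epsilon\,\mathbf n(q)$, for otherwise $x$ would lie on the exterior side of the collar. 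Hence $x=\Phi(q,\epsilon)$, and injectivity of $\Phi$ forces $q$ to be the unique nearest point. Conversely, for $0<\epsilon<\epsilon_1$ and $p\in\partial\Lambda$, the same reasoning shows $\dist(\Phi(p,\epsilon),\partial\Lambda)=\epsilon$ (a strictly nearer boundary point would again contradict injectivity of $\Phi$). Therefore, after replacing $\epsilon_o$ by $\epsilon_1$, we obtain $\Gamma_\epsilon=f_\epsilon(\partial\Lambda)$ with $f_\epsilon:=\Phi(\cdot,\epsilon)$.

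Finally I would read off the claimed properties. As the restriction of the $C^1$ diffeomorphism $\Phi$ to the submanifold $\partial\Lambda\times\{\epsilon\}$, the map $f_\epsilon$ is an injective $C^1$ immersion of $\partial\Lambda$, so $\Gamma_\epsilon$ is a $C^1$ simple closed curve and $f_\epsilon:\partial\Lambda\to\Gamma_\epsilon$ is a $C^1$ diffeomorphism. By construction $f_\epsilon(x)-x=\epsilon\,\mathbf n(x)$ has length $\epsilon$ and is normal to $\partial\Lambda$ at $x$. Differentiating $s\mapsto\gamma(s)+\epsilon\,\mathbf n(\gamma(s))$ and using the planar Frenet relation $\mathbf n'=-\kappa\gamma'$ gives $\tfrac{d}{ds}f_\epsilon(\gamma(s))=(1-\epsilon\kappa(\gamma(s)))\,\gamma'(s)$, which is a nonzero multiple of $\gamma'(s)$ once $\epsilon_o<1/\sup_{\partial\Lambda}|\kappa|$ (and this bound is automatic, since $\Phi$ is a diffeomorphism on the collar). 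Hence the tangent to $\Gamma_\epsilon$ at $f_\epsilon(x)$ is parallel to the tangent to $\partial\Lambda$ at $x$, so $f_\epsilon(x)-x$ is normal to $\Gamma_\epsilon$ at $f_\epsilon(x)$ as well, completing the verification.

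The routine parts are the inverse-function-theorem step and the Frenet computation; the hard part will be the identification $\Gamma_\epsilon=\Phi(\partial\Lambda\times\{\epsilon\})$. There one must rule out points at distance exactly $\epsilon$ from $\partial\Lambda$ that are not of the form $x+\epsilon\,\mathbf n(x)$, which requires combining the nearest-point-projection argument (a minimizer on a $C^1$ curve yields orthogonality) with the injectivity of $\Phi$ on the collar, and one must be careful to get the inward/outward sign right and to arrange that the relevant $\epsilon$-sublevel set of $\dist(\cdot,\partial\Lambda)$ sits inside the collar $U$.
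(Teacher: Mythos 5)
Your proof is correct and reaches the same conclusions, but the route to uniform injectivity of the normal map is genuinely different from the paper's. The paper introduces, for each $x\in\partial\Lambda$, the quantity $\epsilon_x=\sup\{\bar\epsilon>0:\overline{B_{\bar\epsilon}(x+\bar\epsilon v(x))}\cap\partial\Lambda=\{x\}\}$ (the largest interior tangent ball at $x$) and asserts a uniform lower bound $\epsilon_x\ge\epsilon_o$ by taking $\epsilon_o$ below the minimum radius of curvature; from $\epsilon<\epsilon_o$ it then deduces directly that $F(x)=x+\epsilon v(x)$ satisfies $\dist(F(x),\partial\Lambda)=\epsilon$, that $F$ is injective, and that $\operatorname{Im}F=\Gamma_\epsilon$. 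You instead run the classical tubular-neighborhood argument: the inverse function theorem gives local diffeomorphy of $\Phi(x,t)=x+t\mathbf n(x)$ along $\partial\Lambda\times\{0\}$, and a sequential compactness argument upgrades this to global injectivity on a uniform collar. Your route is arguably more robust: the paper's justification of $\epsilon_x\ge\epsilon_o$ via the curvature bound alone is a bit quick, since $\epsilon_x$ also reflects non-local self-approach of $\partial\Lambda$ (think of a thin-necked dumbbell with mildly curved boundary), and it is precisely the compactness/IFT argument in your proof that rules this out cleanly. On the other hand, the paper's rolling-ball formulation makes the bound $\epsilon_o$ explicit in terms of curvature, which your IFT argument does not. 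Two further small points where your write-up is more complete than the paper's: you explicitly check that $f_\epsilon$ is an immersion via the Frenet computation $\tfrac{d}{ds}f_\epsilon(\gamma(s))=(1-\epsilon\kappa)\gamma'(s)$ (the paper states that $\operatorname{Im}F$ is $C^1$ and diffeomorphic to $\partial\Lambda$ without verifying $F'\ne 0$), and you explicitly handle the sign issue $x-q=+\epsilon\mathbf n(q)$ versus $-\epsilon\mathbf n(q)$ using the one-sidedness of the collar, which the paper leaves implicit.
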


\begin{proof}
Define $v:\partial\Lambda\to S^{1}$ to be the inward
pointing unit normal vector field. Next define $F:\partial\Lambda\to\Lambda$
by $F(x)=x+\epsilon v(x)$. This map is well defined for $\epsilon$
small, so that $Im(F)\subset\Lambda$. In fact we will show $F$ is
the desired diffeomorphic map $f_{\epsilon}$.

Note that for any $x\in\partial\Lambda$ there exists $\epsilon_{x}=\sup\{\overline{\epsilon}>0:\overline{B_{\overline{\epsilon}}(x+\overline{\epsilon}v(x))}\cap\partial\Lambda=\{x\}\}$. We claim that there exists an $\epsilon_{o}>0$ such that $\epsilon_{x} \geq \epsilon_{o}$ for all $x\in\partial\Lambda$. Indeed, this is true because $\partial\Lambda$ is assumed to be $C^{2}$, so that its radius of curvature is a continuous, strictly positive function on $\partial\Lambda$, which is assumed to be compact. By taking $\epsilon_{o}$ to be smaller than the positive lower bound for the radius of curvature, the proof of the claim is complete.
Therefore, for $0<\epsilon<\epsilon_{o}$, we get $\epsilon=|F(x)-x|= \h{dist}(F(x),\partial\Lambda)$, for any $x\in\partial\Lambda$, so that
in particular $Im(F)\subset\Gamma_{\epsilon}$. Furthermore if $y\in\Gamma_{\epsilon}$
then there exists $x\in\partial\Lambda$ with $|x-y|=\epsilon$, so
in particular $y=F(x)$. Hence $Im(F)=\Gamma_{\epsilon}$. Next note
that $F$ is clearly 1-1 since $\overline{B_{\epsilon}(F(x))}\cap\partial\Lambda=\{x\}$
for every $x\in\partial\Lambda$. Hence $F$ is a bijection.

Finally, note that since $\partial\Lambda$ is $C^{2}$ then $v$ is
$C^{1}$ and so $F$ is $C^{1}$. Hence $Im(F)=\Gamma_{\epsilon}$
is a $C^{1}$ curve which is diffeomorphic to $\partial\Lambda$. Also
by definition $x-F(x)$ is normal to $\partial\Lambda$ at $x$ and
since $\h{dist}(x,\Gamma_{\epsilon})=\epsilon=|x-F(x)|$ we also have
that $x-F(x)$ is normal to $\Gamma_{\epsilon}$ at $F(x)$. This completes
the proof of the proposition.\end{proof}

\begin{prop}\label{App2}Assume $\gamma:[0,1]\to\mathbb{R}^{2}$
is a $C^{2}$ simple (non-closed), regular curve. Then there exists $t\in(0,1)$ such that
$\gamma'(t)$ is parallel to $\gamma(1)-\gamma(0)$.
\end{prop}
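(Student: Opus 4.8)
The plan is to reduce the statement to Rolle's theorem applied to a single scalar function. Set $v := \gamma(1)-\gamma(0)$. Since $\gamma$ is simple and non-closed, $\gamma(0)\neq\gamma(1)$, so $v\neq 0$. Choose a unit vector $w\in\mathbb{R}^{2}$ orthogonal to $v$, and define $g:[0,1]\to\mathbb{R}$ by $g(t):=\langle\gamma(t),w\rangle$.

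First I would observe that $g$ is $C^{2}$, hence in particular continuous on $[0,1]$ and differentiable on $(0,1)$, being the composition of the $C^{2}$ curve $\gamma$ with a linear functional. Moreover $g(1)-g(0)=\langle\gamma(1)-\gamma(0),w\rangle=\langle v,w\rangle=0$, so $g(0)=g(1)$. By Rolle's theorem there exists $t\in(0,1)$ with $g'(t)=0$, i.e.\ $\langle\gamma'(t),w\rangle=0$.

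It remains to conclude that $\gamma'(t)$ is parallel to $v$. Since we are in $\mathbb{R}^{2}$, the orthogonal complement of $\mathbb{R}w$ is precisely $\mathbb{R}v$, so $\langle\gamma'(t),w\rangle=0$ forces $\gamma'(t)\in\mathbb{R}v$. Finally, because $\gamma$ is a regular curve, $\gamma'(t)\neq0$, so $\gamma'(t)$ is a genuine nonzero scalar multiple of $v=\gamma(1)-\gamma(0)$, as desired.

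There is essentially no obstacle here; the only points requiring (minimal) care are invoking simplicity to guarantee $v\neq0$ so that the orthogonal direction $w$ is well defined, and invoking regularity so that the vanishing of $g'(t)$ yields an honest nonzero tangent vector rather than the zero vector. Everything else is the one-variable Rolle theorem, and the hypotheses $C^{2}$ and simple are in fact far stronger than needed ($C^{1}$ and $\gamma(0)\neq\gamma(1)$ would suffice).
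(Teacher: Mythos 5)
Your proof is correct and essentially coincides with the paper's: the paper applies Cauchy's mean value theorem to the two coordinate functions of $\gamma$, which is exactly Rolle's theorem applied to your scalar function $g(t)=\langle\gamma(t),w\rangle$ with $w\perp\gamma(1)-\gamma(0)$. Your phrasing is slightly cleaner in that it avoids the paper's case split on which component of $\gamma(1)-\gamma(0)$ is nonzero, and both arguments use regularity in the same way to rule out the degenerate vanishing of the tangent vector.
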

\begin{proof}
%%We can assume without loss of generality that $\gamma$ is %%regular. Indeed, if it is not so, the assertion is trivial %%since the zero vector is parallel to any vector. 
Let $\gamma(s) = (\gamma_{1}(s),\gamma_{2}(s))$. By Cauchy's mean value theorem, there exists some $t\in(0,1)$ such that 
$(\gamma_{1}(1)-\gamma_{1}(0))\gamma_{2}'(t)=(\gamma_{2}(1)-\gamma_{2}(0))\gamma_{1}'(t)$. 
Without loss of generality, we can assume that $\gamma_{2}(1)-\gamma_{2}(0)\neq 0$. 
Then, if  $\gamma_{2}'(t)=0$, we get that $\gamma_{1}'(t)=0$, which contradicts the regularity assumption. Dividing by $(\gamma_{2}(1)-\gamma_{2}(0))\gamma_{2}'(t)$, we have the result. 
\end{proof}

\begin{prop}\label{App3}
Assume $\Lambda$ is a bounded simply connected
domain with $C^{2}$ boundary. Let $\epsilon>0$ be such that $\Gamma_{\epsilon}=\{x\in\Lambda: \h{dist}(x,\partial\Lambda)=\epsilon\}$
is a $C^{1}$ simple curve and $f_{\epsilon}:\partial\Lambda\to\Gamma_{\epsilon}$
be a diffeomorphism such that $f_{\epsilon}(x)-x$ is normal to both
curves at the points $x$ and $f_{\epsilon}(x)$ and $|f_{\epsilon}(x)-x|=\epsilon$
(guaranteed to exist by Proposition \ref{App1}). 
Now let $\gamma:[0,1]\to\mathbb{R}^{2}$ be a simple $C^{1}$ closed curve that lies in $\Lambda$ and contains $\Gamma_{\epsilon}$ in its interior in the sense that every continuous path between $\Gamma_{\epsilon}$ and $\partial\Lambda$ must intersect $\gamma$. Also assume that $\gamma$ intersects neither $\partial\Lambda$ nor $\Gamma_{\epsilon}$. Then for all pairs $C,D\in\partial\Lambda$ of distinct points, if $I\subset\partial\Lambda$ is the subset of $\partial\Lambda$ that connects $C$ and $D$ as we move clockwise around $\partial\Lambda$ there exists a connected subset of $\gamma$ that lies completely in the closed figure $\Theta_{CD}$ bounded by the segments $[C,C_{2}],[D,D_{2}]$ and by $I$ and $f_{\epsilon}(I)$ such that it has one endpoint on each of $[C,C_{2}]$ and $[D,D_{2}]$, where $C_{2}=f_{\epsilon}(C)$and $D_{2}=f_{\epsilon}(D)$.
\end{prop}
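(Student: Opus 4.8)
The plan is to straighten the collar $\Lambda_{\epsilon}:=\{x\in\Lambda:\dist(x,\partial\Lambda)<\epsilon\}$ into a standard annulus, in which $\Theta_{CD}$ becomes a rectangle and $\gamma$ a non-contractible Jordan curve, and then to extract the required sub-arc by lifting to the universal cover of the annulus. The first observation is that the hypotheses force $\gamma\subset\Lambda_{\epsilon}$ and force $\gamma$ to ``encircle'' $\Gamma_{\epsilon}$: since $\gamma$ is a Jordan curve contained in $\Lambda$ that misses both $\Gamma_{\epsilon}$ and $\partial\Lambda$, and every continuous path from $\Gamma_{\epsilon}$ to $\partial\Lambda$ meets $\gamma$, the curve $\gamma$ cannot bound a disk inside $\Lambda_{\epsilon}$ and must separate the two boundary circles of $\overline{\Lambda_{\epsilon}}$; in particular $\overline{\operatorname{int}(\Gamma_{\epsilon})}\subset\operatorname{int}(\gamma)$, whence $\gamma\subset\Lambda\setminus\overline{\operatorname{int}(\Gamma_{\epsilon})}=\Lambda_{\epsilon}$.

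Next I would set up collar coordinates. Arguing as in the proof of Proposition \ref{App1}, the map $\Phi(x,t)=x+t\,\mathbf{n}(x)$, with $\mathbf{n}$ the inward unit normal, is a homeomorphism from $\partial\Lambda\times[0,\epsilon]$ onto $\overline{\Lambda_{\epsilon}}$ that sends $\partial\Lambda\times\{0\}$ to $\partial\Lambda$, sends $\partial\Lambda\times\{\epsilon\}$ to $\Gamma_{\epsilon}$ via $f_{\epsilon}$, and sends each normal segment $\{C\}\times[0,\epsilon]$ to $[C,C_{2}]$. Rescaling $[0,\epsilon]$ to $[0,1]$ and identifying $\partial\Lambda$ with $S^{1}=\mathbb{R}/\mathbb{Z}$, the region $\Theta_{CD}$ corresponds to a rectangle $R=J\times[0,1]$, where $J=[0,s]\subset S^{1}$ is the arc corresponding to $I$ and $0<s<1$; the left edge $\{0\}\times[0,1]$ corresponds to $[C,C_{2}]$, the right edge $\{s\}\times[0,1]$ to $[D,D_{2}]$, and the top and bottom edges lie in $\Gamma_{\epsilon}$ and $\partial\Lambda$ respectively. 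Meanwhile $\gamma$ corresponds to a simple closed curve $\widetilde{\gamma}\subset S^{1}\times(0,1)$, and by the previous paragraph $\widetilde{\gamma}$ is non-contractible, hence has winding number $\pm1$ around the annulus; we may assume it is $+1$.

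Now I would pass to the universal cover. Let $\pi:\mathbb{R}\times(0,1)\to S^{1}\times(0,1)$ be the covering map and $\widehat{\gamma}=(\widehat{h},v):\mathbb{R}\to\mathbb{R}\times(0,1)$ a lift of $\widetilde{\gamma}$; then $\widehat{\gamma}$ is injective, $\widehat{\gamma}(u+1)=\widehat{\gamma}(u)+(1,0)$, and $\widehat{h}$ is continuous with $\widehat{h}(u+1)=\widehat{h}(u)+1$, so $\widehat{h}$ is surjective and $\widehat{h}(u)\to\pm\infty$ as $u\to\pm\infty$. Set $\sigma=\inf\{u:\widehat{h}(u)=s\}$ and then $\tau=\sup\{u\le\sigma:\widehat{h}(u)=0\}$. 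Using only the intermediate value theorem one checks that $\tau<\sigma$, that $\widehat{h}(\tau)=0$ and $\widehat{h}(\sigma)=s$, and that $\widehat{h}$ takes values in $[0,s]$ on all of $[\tau,\sigma]$ (this is where it matters that $\widetilde{\gamma}$, being confined to $S^{1}\times(0,1)$, can leave the rectangle $R$ only through its vertical edges). Thus $\widehat{\gamma}([\tau,\sigma])\subset[0,s]\times(0,1)$ is an arc running from the lifted left edge to the lifted right edge; since $s<1$ this vertical strip is disjoint from its translate by $(1,0)$, so $[\tau,\sigma]$ is disjoint from $[\tau+1,\sigma+1]$ and hence $\sigma-\tau<1$, which means $\pi$ is injective on $[\tau,\sigma]$ and $\widehat{\gamma}([\tau,\sigma])$ descends to an honest connected sub-arc of $\widetilde{\gamma}$ lying in $R$ with one endpoint on each vertical edge. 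Transporting this arc back through $\Phi$ produces a connected subset of $\gamma$ contained in $\Theta_{CD}$ with one endpoint on $[C,C_{2}]$ and one on $[D,D_{2}]$, which is the assertion.

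I expect the main obstacle to be the bookkeeping in this last step: verifying not merely that $\widetilde{\gamma}$ crosses the rectangle but that the specific interval $[\tau,\sigma]$ yields a sub-arc that stays inside $R$, and that this sub-arc projects injectively so that it really is a connected piece of $\gamma$ rather than a path that has wound part-way around the annulus. A secondary, more routine point is to set up the collar homeomorphism $\Phi$ with enough care to know that $\Theta_{CD}$ is a Jordan domain whose boundary is the concatenation of the four prescribed arcs in the expected cyclic order; once that is in hand, the coordinate change is unambiguous and the rest is elementary.
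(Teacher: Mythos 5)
Your argument is correct, and it takes a genuinely different route from the paper. The paper proves Proposition \ref{App3} by contradiction using Janiszewski's separation theorem in the plane: assuming no crossing arc exists, it partitions $\gamma\cap\Theta_{CD}$ into the union $A'$ of components meeting $[C,C_2]$ and the union $B'$ of those meeting $[D,D_2]$, establishes compactness of $A'$ and $B'$, augments both with the complementary collar region $\Delta_{CD}$, and then applies Janiszewski to conclude that $\gamma$ cannot separate an interior boundary point $E$ from $E_2=f_{\epsilon}(E)$, contradicting the hypothesis. Your approach instead straightens the collar into a standard annulus via $\Phi(x,t)=x+t\,\mathbf{n}(x)$, observes that the separation hypothesis forces $\gamma$ into the collar and makes the transplanted curve $\widetilde{\gamma}$ a non-contractible Jordan curve of winding number $\pm1$, and then reads off the crossing arc from the universal cover $\mathbb{R}\times(0,1)$ via the intermediate value theorem applied to the first coordinate of a lift. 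Each approach has its virtues: the paper's is entirely two-dimensional plane topology with no coordinate change but leans on Janiszewski's theorem and some fiddly compactness bookkeeping; yours requires the (essentially free, given Proposition \ref{App1}) collar parametrization, but after that it isolates the winding number as the single relevant invariant, and the extraction of the sub-arc via $\sigma=\inf\{u:\widehat{h}(u)=s\}$, $\tau=\sup\{u\le\sigma:\widehat{h}(u)=0\}$ together with the $\sigma-\tau<1$ estimate (needed so that the lifted arc descends injectively) is clean and checkable by the IVT alone. Your preliminary step, that $\gamma\subset\Lambda_{\epsilon}$ and $\overline{\operatorname{int}(\Gamma_{\epsilon})}\subset\operatorname{int}(\gamma)$, is correct as well: since $\gamma$ is connected and disjoint from $\Gamma_{\epsilon}$, it lies on one side of $\Gamma_{\epsilon}$, and lying in $\operatorname{int}(\Gamma_{\epsilon})$ would force $\operatorname{int}(\gamma)\subset\operatorname{int}(\Gamma_{\epsilon})$, making it impossible for $\gamma$ to separate $\Gamma_{\epsilon}$ from $\partial\Lambda$.
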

This proposition is illustrated in Figure 2, where the path between $P$ and $Q$ is the connected subset of $\gamma$ which the proposition guarantees. 
\begin{proof}
We prove this by contradiction. Without loss of generality,
we can assume that $\gamma(0)$ lies outside $\Theta_{CD}$. Let $t_{1}=\inf\{t\colon\gamma(t)\in\Theta_{CD}\}$
and $t_{2}=\sup\{t\colon\gamma(t)\in\Theta_{CD}\}$. By the compactness
of the unit interval, the continuity of $\gamma$, and the closedness
of $\Theta_{CD}$, the infimum and supremum are attained.

Consider $S=\gamma[t_{1},t_{2}]\cap\Theta_{CD}$. Since we are assuming
that there does not exist a connected subset of $\gamma$ that lies
completely in the closed figure $\Theta_{CD}$ such that it has one
endpoint on each of $[C,C_{2}]$ and $[D,D_{2}]$, it follows that
there are two, mutually exclusive types of path connected components
of $S$ - those that intersect $[CC_{2}]$ and those that intersect
$[DD_{2}]$. Let $A'$ denote the union of all the path connected
components of $S$ that intersect $[CC_{2}]$ and $B'$ denote the
union of those that intersect $[DD_{2}]$. Then, $A'$ and $B'$ are
disjoint, and we claim that they are also compact subsets of $\mathbb{R}^{2}$. 

First, let us see how the compactness of $A'$ and $B'$ finishes
the proof. Let $\Delta'_{CD}$ denote the complement of $\Theta_{CD}$
in the region enclosed between $\partial\Lambda$ and $\Gamma_{\epsilon}$.
Let $\Delta_{CD}=\Delta'_{CD}\cup[CC_{2}]\cup[DD_{2}]$. Note that
$\Delta_{CD}$ is a closed and bounded, hence compact subset of $\mathbb{R}^{2}$.
Finally, let $A=A'\cup\Delta_{CD}$ and $B=B'\cup\Delta_{CD}$. Then,
the compactness of $A',B'$ implies that $A,B$ are compact and further,
for $E$ in the interior of $I$ and $E_{2}=f_{\epsilon}(E)$, we
get that $E,E_{2}$ are not separated by $A$ or $B$ in the sense
that they lie in the same path connected (and hence, connected) component
of $\mathbb{R}^{2}\backslash A$ and of $\mathbb{R}^{2}\backslash B$.
To see that $E,E_{2}$ are not separated by $A$, we begin by noting
that the compactness of $A'$ implies that the distance between $A'$
and $[D,D_{2}]$ is always greater than some $\epsilon>0$. Then,
we can go from $E,E_{2}$ in $\mathbb{R}^{2}\backslash A$ by travelling
along $I$ towards $D$ until we are at a distance $\epsilon/2$ away
from $D$, then moving parallel to $[D,D_{2}]$ until we hit $f_{\epsilon}(I)$,
and finally, moving along $f_{\epsilon}(I)$ to $E_{2}$. Here, we
use the fact that $A'$ does not intersect $I$ or $f_{\epsilon}(I)$.
A similar argument shows that $E,E_{2}$ are not separated by $B$. 

Now, we recall Janiszewski's theorem \cite[Ap.3.2]{Dieudonne}, which
says that if $A,B$ are compact subsets of $\mathbb{R}^{2}$ such
that $A\cap B$ is connected, and $E,E_{2}\in\mathbb{R}^{2}\backslash A\cup B$
such that neither $A$ nor $B$ separates $E,E_{2}$, then $A\cup B$
also does not separate $E,E_{2}$. This implies that $\gamma$ does
not separate $E,E_{2}$ i.e. $E,E_{2}$ lie in the same connected
component of $\mathbb{R}^{2}\backslash\gamma$ (and hence, the same
path connected component of the open subset $\mathbb{R}^{2}\backslash\gamma$
of $\mathbb{R}^{2}$). But this contradicts the hypothesis that $\Gamma_{\epsilon}$
lies in the interior of $\gamma$. 

So, to finish the proof, we only need to show that $A',B'$ are compact.
We do this only for $A'$, the proof for $B'$ being similar. Since
$A'$ is a bounded subset of $\mathbb{R}^{2}$, we only need to show
that it is closed in $\mathbb{R}^{2}$. Let $p_{n}$ be a sequence
of points in $A'$ such that $p_{n}\rightarrow p$. There exists a
unique sequence $\{a_{n}\}\in[t_{1},t_{2}]$ such that $p_{n}=\gamma(a_{n})$.
By the compactness of $[t_{1},t_{2}]$, we can, after possibly passing
to a subsequence, assume that $a_{n}\rightarrow a\in[t_{1,},t_{2}]$. After possibly passing to another subsequence, we can further assume that either $a_{n} < a$ or $a_{n} > a$ (since the case where $a_{n}$ is eventually $a$ is trivial). In the subsequent discussion, we will assume that $a_{n} < a$. The case $a_{n} > a$ is treated similarly.  

By the continuity of $\gamma$, it follows that $p=\gamma(a)$.
Since $a\in[t_{1},t_{2}]$, we have that either $p\in A'$ or $p\in B'$.
If $p\in A'$, then we are done, so suppose that $p\in B'$. Since
$\gamma[a_{n},a]$ is path connected but $\gamma(a_{n})$ and $\gamma(a)$
are in different path connected components of $S$, it follows that
$\gamma[a_{n},a]$ must leave $S$ through $[C,C_{2}]$. Let $a_{n}<b_{n}<a$
be such that $\gamma(b_{n})\in[C,C_{2}]$. Note that we can always
find such a $b_{n}$ by the previous remark. But then, $b_{n}\rightarrow a$,
so that from the continuity of $\gamma$ and the closedness of $[C,C_{2}]$
we get $\gamma(a)=\lim_{n}\gamma(b_{n})\in[C,C_{2}]$, which contradicts
that $\gamma(a)\in B'$. This completes the proof. 
\end{proof}
\begin{prop}\label{App4} Let $\Omega$ and $\Lambda$ be bounded, connected, simply connected open domains in $\mathbb{R}^{2}$, equipped with the uniform measures $\mu$ and $\nu$. Assume that $\overline{\Omega}$ 
convex. Then, the OTM $T\colon\Lambda\to\Omega$ admits a single-valued, continuous extension to 
$\overline{\Lambda}$.
\end{prop}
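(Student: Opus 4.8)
The plan is to show that the optimal map $T\colon\Lambda\to\Omega$ extends continuously to $\overline\Lambda$ by writing $T=\nabla\phi$ for a convex function $\phi$ and exploiting convexity of $\overline\Omega$ to control the subdifferential $\partial\phi$ on $\overline\Lambda$. First I would recall from Brenier's theorem that $T=\nabla\phi$ for some convex $\phi$ defined on (a neighborhood of) $\Lambda$, uniquely up to an additive constant, and that $\nabla\phi$ is defined $\nu$-a.e.\ with $\nabla\phi\in\overline\Omega$ wherever it is defined; moreover on the convex set $\overline\Omega$ the push-forward condition forces $\partial\phi(\Lambda)\subset\overline\Omega$ in the sense that every subgradient lies in $\overline\Omega$ (this uses that $\phi^*$, the Legendre transform, is finite precisely on a set whose closure is $\overline\Omega$, so $\partial\phi(x)\subset\overline{\mathrm{dom}\,\phi^*}=\overline\Omega$). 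The single-valuedness of the extension is then the heart of the matter: at a boundary point $x_0\in\partial\Lambda$ I must show that $\partial\phi(x_0)$ — which a priori could be a segment, or a higher-dimensional convex set — is in fact a single point, and that $\nabla\phi$ has the corresponding limit there.

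The key step is the single-valuedness, and here I would argue by contradiction in two stages. Suppose $\partial\phi(x_0)$ contains two distinct points $p_0,p_1\in\overline\Omega$; by convexity of $\partial\phi(x_0)$ the whole segment $[p_0,p_1]$ lies in $\partial\phi(x_0)$, hence in $\overline\Omega$. Now I would invoke the structure of Alexandrov/Brenier solutions: because $\overline\Omega$ is convex and $\phi^*$ is the Kantorovich potential on the $\Omega$-side, one can look at $T_2=\nabla\phi^*\colon\Omega\to\Lambda$, which by Caffarelli's theory (applicable since $\overline\Omega$ is convex) is a smooth diffeomorphism onto an open full-measure subset $\Omega'\subset$ ... wait — the roles are reversed here, so instead I would use Caffarelli's strict-convexity theorem directly: the potential $\phi^*$ on the convex target-side domain $\Omega$ is strictly convex in its interior, equivalently its subdifferential never contains a segment meeting the interior of $\Omega$. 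The image of the segment $[p_0,p_1]$ under the (multivalued) inverse $\partial\phi^*$ contains $x_0$, so $\partial\phi^*$ contains a segment; since $\nu$ has a density bounded away from zero and $\mu$ one bounded above, the measure $\mu$ of a neighbourhood of that segment forces, via the Monge--Amp\`ere inequality for the second boundary value problem, a contradiction with the fact that such segments must escape to $\partial\Omega$ — precisely the content of Caffarelli's result that $\partial\phi^*$ has no interior segments, together with the elementary observation (as in Lemma~\ref{shortlemma}) that if $[p_0,p_1]$ touched $\partial\Omega$ only at endpoints it would still contradict strict convexity. The only way out is $[p_0,p_1]\subset\partial\Omega$, but a boundary segment of the convex set $\overline\Omega$ contributes zero $\mu$-measure, and cyclical monotonicity of $\mathrm{supp}\,\gamma$ then rules out $x_0$ having a nondegenerate contact set mapping into a boundary segment unless that segment is degenerate. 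Hence $\partial\phi(x_0)$ is a single point.

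Having single-valuedness, continuity of the extension follows from a soft argument: the graph of $\partial\phi$ restricted to $\overline\Lambda$ is closed (subdifferentials of convex functions have closed graphs), it is contained in $\overline\Lambda\times\overline\Omega$ which is compact, and we have just shown the fiber over each $x\in\overline\Lambda$ is a single point; a closed-graph map from a compact Hausdorff space with single-valued fibers into a compact Hausdorff space is automatically continuous. This produces the desired single-valued continuous extension $T\colon\overline\Lambda\to\overline\Omega$, and it agrees with the original optimal map on $\Lambda$ because $\nabla\phi$ is defined a.e.\ and continuous there.

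I expect the main obstacle to be the second stage of the single-valuedness argument — ruling out that $\partial\phi(x_0)$ is a nondegenerate segment lying in $\partial\Omega$. Caffarelli's strict-convexity theorem is stated for interior points, so a boundary segment of $\overline\Omega$ is not immediately excluded by it; one genuinely needs to combine the convexity of $\overline\Omega$ (so that a chord of $\partial\Omega$ of positive length forces a large flat piece of $\partial\Omega$, impossible for, say, $C^2$ or even just non-flat boundary — but the proposition only assumes $\overline\Omega$ convex, so flat pieces are allowed!) with cyclical monotonicity of the support of the transport plan to derive the contradiction. The clean way around this, which I would pursue, is: if $\partial\phi(x_0)\supset[p_0,p_1]\subset\partial\Omega$, then by monotonicity the entire segment $[p_0,p_1]$ would have to be in the image of $\partial\phi$ over arbitrarily small neighbourhoods of $x_0$, forcing $\mu$-positive mass of a neighbourhood of $x_0$ to map onto a set of $\mu$-measure zero (the boundary segment), contradicting that $(\nabla\phi)_\#\nu=\mu$ is mutually absolutely continuous with Lebesgue measure and $\nabla\phi$ is the a.e.\ defined transport map. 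Making this last density/measure bookkeeping fully rigorous — in particular handling that $x_0$ is on $\partial\Lambda$ where $\phi$ may only be defined as a lower semicontinuous extension — is the delicate point.
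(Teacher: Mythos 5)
Your overall strategy matches the paper's: write $T=\nabla\phi$ for a global convex potential, observe $\partial\phi(\mathbb{R}^2)\subset\overline{\Omega}$ by convexity of $\overline{\Omega}$, rule out segments in $\partial\phi(x)$ to get single-valuedness everywhere, and conclude that $\phi$ is differentiable and hence $C^1$ (your closed-graph compactness argument is a sound alternative to Rockafellar's theorem for the last step). The difference is that the paper dispatches the crucial ``no segment'' step by citing Figalli's Proposition~3.2, which is stated for all $x\in\mathbb{R}^2$ (not just interior points of $\Lambda$), so it applies directly at $x_0\in\partial\Lambda$ and covers the case of a segment lying in $\partial\Omega$ with no extra work.

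Your attempt to reprove this step from scratch has a genuine gap, and it is exactly the one you flag. Suppose $[p_0,p_1]\subset\partial\phi(x_0)\cap\partial\Omega$. You claim that monotonicity forces positive $\mu$-mass near $x_0$ to land on the null segment $[p_0,p_1]$. This does not follow: the inclusion $[p_0,p_1]\subset\partial\phi(x_0)$ is a statement about the single point $x_0$ only, and $(\nabla\phi)^{-1}$ of the relative interior of the segment is just $\{x_0\}$, which is Lebesgue-null. Nearby points of $\Lambda$ are free to map into the interior of $\Omega$ away from the segment, so no mass is forced onto a null set and no contradiction with absolute continuity of the pushforward arises this way. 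Ruling out boundary segments really does require the Monge--Amp\`ere measure / sections machinery behind Caffarelli's strict convexity and Figalli's extension of it; your cyclical-monotonicity-plus-measure bookkeeping as sketched does not reach it. The cleanest fix is to do what the paper does: invoke Figalli's result (Property~\textbf{B}) directly, noting that it is a global statement on $\mathbb{R}^2$, and pair it with the fact from Caffarelli's Lemma~1(b) that $\partial\phi(\mathbb{R}^2)\subset\overline{\Omega}$, so any putative segment in $\partial\phi(x)$ is automatically a segment in $\overline{\Omega}$.
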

\begin{proof}
By considering $\nu$ as a measure on all of $\RR^2$ supported
on $\Lambda$, Brenier's theorem furnishes a globally Lipschitz convex function $\varphi\colon\mathbb{R}^{2}\to\mathbb{R}$ such that $T=\nabla\varphi$ on $\Lambda$ (the equality holds everywhere, instead of just almost everywhere, because of Caffarelli's regularity theory), and
$\partial\varphi(\mathbb{R}^{2})\subset\overline{\Omega}$ 
since $\overline{\Omega}$ is convex
\cite[Lemma 1(b)]{Caf1992}. 
For $x\in\RR^2$, the set $\partial\varphi(x)$ is convex \
\cite[p. 215]{Rock},
and so if it is not a singleton it contains
a segment that is contained 
in the convex set $\overline{\Omega}$. 
However, this contradicts
Property \textbf{B} stated in \S\ref{SqManSec}. In particular, $\varphi$ is differentiable on $\mathbb{R}^{2}$
and therefore is $C^1$ \cite[Theorem 25.5]{Rock}, implying the statement.
\end{proof}

%\bibliography{bib} 
%\bibliographystyle{plain}

\end{document}